\newtheorem{proposition}{Proposition}[section]
\newtheorem{theorem}[proposition]{Theorem}
\newtheorem{corollary}[proposition]{Corollary}
\newtheorem{lemma}[proposition]{Lemma}
\newtheorem{remark}[proposition]{Remark}
\numberwithin{equation}{section}
\newcommand{\nc}{\newcommand}
\nc{\R}{{\mathbb R}}
\nc{\MM}{{\mathbb M}}
\nc{\bS}{{\mathbb S}^{d-1}}
\nc{\N}{{\mathbb N}}
\nc{\C}{{\mathbb C}}
\nc{\Z}{{\mathbb Z}}
\nc{\BP}{\mathbb{P}}
\nc{\BE}{\mathbb{E}}
\nc{\BQ}{\mathbb{Q}}
\nc{\bN}{{\mathbf N}}
\nc{\BS}{{\mathbb S}}
\nc{\BX}{{\mathbb X}}
\nc{\BY}{{\mathbb Y}}
\nc{\cB}{{\mathcal B}}
\nc{\cX}{{\mathcal X}}
\nc{\cY}{{\mathcal Y}}
\nc{\dint}{{\rm d}}
\nc{\D}{\Delta}
\nc{\g}{\gamma}
\nc{\cI}{\mathcal{I}}
\nc{\cZ}{\mathcal{Z}}
\nc{\cum}{\operatorname{cum}}
\nc{\sZ}{{\mathscr{Z}}}
\DeclareMathOperator{\BV}{\operatorname{Var}}
\DeclareMathOperator{\closure}{cl}
\DeclareMathOperator{\Vol}{Vol}
\DeclareMathOperator{\interior}{int}
\let\@fnsymbol\@alph
\begin{document}
\title{\bf Moderate deviations on Poisson chaos}
\date{}

\renewcommand{\thefootnote}{\fnsymbol{footnote}}

\author{Matthias Schulte\footnotemark[1]\; and Christoph Th\"ale\footnotemark[2]\,}

\footnotetext[1]{Institute of Mathematics, Hamburg University of Technology, Germany. E-mail: matthias.schulte@tuhh.de}

\footnotetext[2]{Faculty of Mathematics, Ruhr University Bochum, Germany. E-mail: christoph.thaele@rub.de}

\maketitle

\begin{abstract}
This paper deals with U-statistics of Poisson processes and multiple Wiener-It\^o integrals on the Poisson space. Via sharp bounds on the cumulants for both classes of random variables, moderate deviation principles, concentration inequalities and normal approximation bounds with Cram\'er correction are derived. It is argued that the results obtained in this way are in a sense best possible and cannot be improved systematically. Applications in stochastic geometry and to functionals of Ornstein-Uhlenbeck-L\'evy processes are investigated.
  \bigskip
  \\
  {\bf Keywords}. {Cumulants, moderate deviations, multiple stochastic integrals, Poisson processes, stochastic geometry, U-statistics}\\
  {\bf MSC}. Primary 60F10, 60G55; Secondary 60D05, 60G51.

\end{abstract}

\section{Introduction}\label{sec:Introduction}

Probabilistic limit theorems for functionals of Poisson processes were intensively studied over the past decades. Starting with the seminal work \cite{PeccatiSoleTaqquUtzet}, this field of research got a particular new drive. In that paper, Malliavin calculus for Poisson processes was combined for the first time with Stein's method for normal approximation to deduce new central limit theorems with explicit error bounds. Previously, this connection had been established and exploited for functionals of Gaussian processes and has led to a large number of exciting new developments, see e.g.\ the monograph \cite{NP11Book} for an excellent introduction. As an example we mention the celebrated fourth moment theorem, which states that a sequence of random variables living inside a fixed Wiener chaos and having unit variance converges in distribution to a standard Gaussian random variable if and only if their fourth moments converge to $3$, the fourth moment of the standard Gaussian distribution. For the Poisson space, which we consider throughout this paper, a fourth moment theorem in the same spirit as well as some refinements were established in \cite{DoeblerPeccatiAoP18,DoeblerPeccatiECP18,DoeblerVidottoZheng}. Indeed, for a sequence of random variables $(F_n)_{n\in\N}$ living inside a fixed Poisson chaos, so-called multiple Wiener-It\^o integrals, such that $\BE[F_n^2]=1$ for each $n\in\N$ one has that
\begin{align*}
F_n\overset{d}{\longrightarrow}N\sim\mathcal{N}(0,1)\qquad\text{if}\qquad\BE[F_n^4]\to 3,
\end{align*}
as $n\to\infty$. This can equivalently be rephrased by saying that
\begin{align*}
F_n\overset{d}{\longrightarrow}N\sim\mathcal{N}(0,1)\qquad\text{if}\qquad\cum_4(F_n)\to 0,
\end{align*}
as $n\to\infty$, where $\cum_4(F_n):=\BE[F_n^4]-3$ stands for the fourth cumulant of $F_n$. In case that {the sequence} $(F_n^4)_{n\in\mathbb{N}}$ is uniformly integrable also the reverse direction {of this implication} is true.

Poisson functionals, {i.e.\ random variables depending only on a Poisson process,} play a crucial role in stochastic geometry, where one frequently studies random structures constructed from an underlying Poisson process. For such {situations} the Malliavin-Stein method is a very useful approach that has been extensively employed over the last years (see e.g.\ the volume of survey articles \cite{PeccatiReitznerBook}). Although multiple Wiener-It\^o integrals are very important and interesting objects, statistics of interest in stochastic geometry are usually not single multiple Wiener-It\^o integrals as considered in the fourth moment theorem above. {However,} many of them are so-called U-statistics of Poisson processes. Since these Poisson U-statistics can be written as finite sums of multiple Wiener-It\^o integrals, they are closely related to multiple Wiener-It\^o integrals and very well suited for the Malliavin-Stein approach. This technique was used in e.g.\ \cite{BourguinPeccati,EichelsbacherThaele,LachiezeReyPeccati1EJP13,LachiezeReyPeccati2SPA13,LPST,PeTh13,ReitznerSchulte,SchulteKolmogorov} to derive general normal approximation results for Poisson U-statistics, which were applied to different situations such as 
\begin{itemize}
\item[(i)] geometric random graphs \cite{BourguinPeccati,LachiezeReyPeccati1EJP13,LachiezeReyPeccati2SPA13,ReitznerSchulte,ReitznerSchulteThaele},
\item[(ii)] intersection and flat processes \cite{BetkenHugThaele,HeroldHugThaele,HTW,LPST,ReitznerSchulte,SchulteThaele14Flats},
\item[(iii)] {random} simplicial complexes \cite{AkinwandeReitzner,DecreusefondFerrazRandriambololonaVergne},
\item[(iv)] the statistical analysis of spherical Poisson fields \cite{BourguinDurastantiMarinucciPeccati,BourguinDurastantiMarinucciPeccati_survey}.
\end{itemize}  
We also point to the works \cite{BachmannPeccati,BachmannReitzner} for the study of concentration bounds for U-statistics of Poisson processes. For a survey on Poisson U-statistics we refer to \cite{LachiezeReyReitzner}. 

The present paper is focussed on refinements of the central limit theorem for {multiple Wiener-It\^o integrals and U-statistics of Poisson processes.} For finite sums {of such random variables we} study the validity of the Gaussian tail behaviour on scales beyond the one of the central limit theorem. We do this by proving moderate deviation principles (MDPs) as well as concentration inequalities and normal approximation bounds with Cram\'er correction. 

Our proofs rely on the so-called method of cumulants, which requires fine estimates on the cumulants of all orders. It is well known that such bounds encode much information about the fine probabilistic behaviour of the involved random variables, see the monograph \cite{SaulisBuch}. In particular, sharp bounds on cumulants lead to moderate deviation principles, see \cite{DoeringEichelsbacher}. For more details on the method of cumulants we {also} refer to the recent survey \cite{DJSsurvey}. In order to control the cumulants, so-called product formulas for the moments and cumulants of multiple Wiener-It\^o integrals are needed. In the present article we improve existing results in this direction (see e.g.\ \cite{LPST,PeTa,Surgailis}) by deriving such bounds under weaker (and partially even optimal) integrability assumptions. These findings are of independent interest.

Our work can be regarded as a continuation of our previous article \cite{SchulteThaele2016}, where we studied similar questions for multiple stochastic integrals on the Wiener space, that is, stochastic integrals with respect to Gaussian processes. On the Wiener space it has been shown in \cite{SchulteThaele2016} that all cumulants of a multiple stochastic integral are bounded in terms of the fourth cumulant. Roughly speaking, this can be seen as a consequence of the hypercontractivity property on the Wiener space. Since no such property is available for Poisson processes, a similar fourth-cumulant-phenomenon cannot be expected for the classes of random variables we consider. This together with the much more involved combinatorial structure of the product formulas for multiple stochastic integrals on the Poisson space makes the derivation of cumulant estimates for U-statistics and multiple stochastic integrals a challenging problem, which is tackled in the present text. It is one of the main features that our results will turn out to be best possible. In fact, we shall identify a range of scales on which general finite sums of Poisson U-statistics and general finite sums of multiple Wiener-It\^o integrals satisfy a MDP and we construct examples of such functionals that cannot satisfy a similar MDP beyond this range of scales. We highlight that this is in sharp contrast to the situation studied in \cite{SchulteThaele2016}, where such an example could not be found so far. This in turn led to a range of scalings for which we could not answer in \cite{SchulteThaele2016} whether or not a MDP is valid for a general sequence of multiple stochastic integrals.

Our general findings for multiple Wiener-It\^o integrals and Poisson U-statistics will be illustrated by means of three examples. We start by specialising our estimates to U-statistics having a fixed kernel. As an application we consider the intersection process of order $q$ generated by a Poisson process of $k$-dimensional totally geodesic submanifolds in a $d$-dimensional standard space of constant curvature $\kappa\in\{-1,0,1\}$. More specifically, we consider the $d-q(d-k)$-dimensional Riemannian volume associated with such an intersection process within a fixed observation window. This naturally connects to the recent line of research in non-Euclidean stochastic geometry. As a second model we investigate the random geometric graph in which two points of a homogeneous Poisson process within some convex body in $\R^d$ are connected by an edge, provided their Euclidean distance does not exceed some given threshold. The Poisson functional we consider is a linear combination of classical subgraph  counting statistics. Finally we study the Ornstein-Uhlenbeck process generated by a Poisson process in space and time. More precisely, our focus lies on the quadratic variation functional of this stochastic process, which admits a representation as a sum of Wiener-It\^o integrals of order one and two.

\bigskip

This paper is organised as follows. After introducing some preliminaries and notation in Section \ref{sec:preliminaries}, we present and discuss our main results in Section \ref{sec:MainResults}. We consider moderate deviations for multiple Wiener-It\^o integrals and Poisson U-statistics in Subsection \ref{subsec:ModerateDeviations+}, while Subsection \ref{subsec:ProductFormulas} deals with product formulas, which are essential ingredients of our proofs. Section \ref{sec:Applications} is devoted to applications, before the proofs are given in Sections \ref{sec:ProofProducFormula}, \ref{sec:ProofsMainResults} and \ref{sec:ProofApplications}.

\section{{Preliminaries}}\label{sec:CombinatoricsPoissonSpaces}\label{sec:preliminaries}

\subsection{Poisson processes and multiple Wiener-It\^o integrals}\label{subsec:PoissonSpacesAndMultipleIntegrals}

Let $(\BX,\cX)$ be a measurable space, which is supplied with a $\sigma$-finite measure $\mu$. A random counting measure $\eta$ on $\BX$ is called a Poisson process with intensity measure $\mu$, provided that
\begin{itemize}
\item[i)] for all $B\in\cX$, $\eta(B)$ is a (possibly {degenerate}) Poisson distributed random variable with mean $\mu(B)$,
\item[ii)] if $B_1,\ldots,B_n\in\cX$, $n\in\mathbb{N}$, are {pairwise disjoint,} the random variables $\eta(B_1),\ldots,\eta(B_n)$ are independent. 
\end{itemize}
For $q,r\in\N$ we let $L^r(\mu^q)$ be the space of measurable functions $f:\BX^q\to\R$ with the property that $|f|^r$ is integrable with respect to $\mu^q$, the $q$-fold product measure of the underlying measure $\mu$. Moreover, we shall denote by $L^r_s(\mu^q)$ the subspace of symmetric functions, that is, functions $f\in L^2(\mu^q)$ that are invariant with respect to arbitrary permutations of their arguments. 

For a counting measure $\xi$ on $\BX$ and $m\in\N$ let us define the measure $\xi^{(m)}$ on $\BX^m$ by
\begin{align*}
\xi^{(m)}(\,\cdot\,) := \int_{\BX}\cdots\int_{\BX} &{\bf 1}((x_1,\ldots,x_m)\in\,\cdot\,)\,\Big(\xi-\sum_{i=1}^{m-1}\delta_{x_i}\Big)(\dint x_m)\Big(\xi-\sum_{i=1}^{m-2}\delta_{x_i}\Big)(\dint x_{m-1})\cdots\\
&\times (\xi-\delta_{x_1})(\dint x_2)\xi(\dint x_1)\,,
\end{align*}
where $\delta_x$ stands for the Dirac measure at $x\in\BX$. For $f\in L_s^1(\mu^q)$ we define the pathwise multiple stochastic integral $I_q(f)$ of $f$ with respect to the (compensated) Poisson process $\eta$ by
$$
I_q(f) := \sum_{J\subset[q]}(-1)^{q-|J|}\int_{\BX^{|J|}}\int_{\BX^{q-|J|}}f(x_1,\ldots,x_q) \, \mu^{q-|J|}(\dint x_{J^c}) \, \eta^{(|J|)}(\dint x_J)
$$
with $[q]:=\{1,\ldots,q\}$, $x_J{:=}(x_j:j\in J)$ and where $|J|$ stands for the cardinality of $J$ (for {$J=[q]$} we interpret the inner integral as $f(x_1,\ldots,x_q)$). The $q$-fold Wiener-It\^o integral of $f\in L_s^2(\mu^q)$ is defined as {the limit of $(I_q(f_n))_{n\in\N}$ in the space of square integrable random variables, where $(f_n)_{n\in\N}$ is a sequence of simple symmetric functions approximating $f$ in $L_s^2(\mu^q)$. We} recall that $\BE[I_q(f)]=0$ and that 
$$
\BE[I_{q_1}(f_1)I_{q_2}(f_2)]=q_1!{\bf 1}(q_1=q_2)\,\langle f_1,f_2\rangle_{L^2(\mu^{q_1})},
$$
where $q_1,q_2\in\N$, $f_1\in L_s^2(\mu^{q_1})$, $f_2\in L_s^2(\mu^{q_2})$ and $\langle\,\cdot\,,\,\cdot\,\rangle_{L^2(\mu^{q_1})}$ denotes the usual scalar product in $L^2(\mu^{q_1})$. In particular, if a random variable has the form $F:=I_{q_1}(f_1)+\ldots+I_{q_k}(f_k)$ with $k\in\N$, distinct $q_1,\ldots,q_k\in\N$ and {$f_i\in L_s^2(\mu^{q_i})$}, $i\in\{1,\ldots,k\}$, we have that $\BE F=0$ and that the variance of $F$ is given by
\begin{align}\label{eq:VarianceIntegrals}
\BV F = q_1!\|f_{q_1}\|_{L^2(\mu^{q_1})}^2+\ldots+q_k!\|f_{q_k}\|_{L^2(\mu^{q_k})}^2\,.
\end{align}
Let us also remark that the collection of all random variables of the form $I_q(f)$ with {$f\in L^2_s(\mu^q)$} is called the $q^{\rm th}$ Poisson chaos (with respect to the measure $\mu$).

We refer to \cite{LastPenroseBook} for background material concerning Poisson processes and a detailed construction of the multiple Wiener-It\^o integral.

\subsection{Poisson U-statistics}\label{subsec:UStatistics}

As in the previous section, let $(\BX,\cX)$ be a measurable space and $\eta$ be a Poisson process on $\BX$ with $\sigma$-finite intensity measure $\mu$. A Poisson U-statistic is a random variable of the form 
$$
S:=\sum_{(x_1,\hdots,x_q)\in\eta^q_{\neq}} f(x_1,\hdots,x_q),
$$
where $q\in\N$, $\eta^q_{\neq}$ stands for the set of all $q$-tuples of distinct points of $\eta$ and $f\in L^1_s(\mu^q)$. Since we sum over all permutations of any combination of $q$ distinct points of $\eta$, we can assume without loss of generality that $f$ is symmetric. We denote $q$ as order and refer to $f$ as the kernel of $S$. The previous definition covers most of the relevant situations. However, our general framework even allows situations where the Poisson process is not given by its atoms, cf.\ \cite[Section 12.3]{LastPenroseBook}. In this case, we mean by a Poisson U-statistic of order $q$ a functional of the type
$$
S:=\int_{\BX^q} f(x_1,\hdots,x_q) \, \eta^{(q)}(\dint (x_1,\hdots,x_q))\,.
$$ 
In the sequel we always use the notation with the sum since we believe that it is more intuitive and is the typical situation for most of our examples. 

The Poisson U-statistic $S$ is square integrable if and only if
\begin{equation}\label{eqn:ConditionKernelUstatistic}
\int_{\BX^i} \bigg(\int_{\BX^{q-i}} f(y_1,\hdots,y_i,x_1,\hdots,x_{q-i}) \, \mu^{q-i}(\dint(x_1,\hdots,x_{q-i}))\bigg)^2 \, \mu^i(\dint(y_1,\hdots,y_i)) <\infty
\end{equation}
for all $i\in\{0,\hdots,q\}$ (see \cite[Proposition 12.12]{LastPenroseBook} and \cite[Section 3]{ReitznerSchulte}). For $i=0$ and $i=q$ this means that {$f\in L_s^1(\mu^q)$ and $f\in L_s^2(\mu^q)$,} respectively. If \eqref{eqn:ConditionKernelUstatistic} is satisfied, the functions $f_i:\BX^i\to\R$, $i\in\{1,\hdots,q\}$, given by
\begin{equation}\label{eqn:kernels}
f_i(y_1,\hdots,y_i):=\binom{q}{i} \int_{\BX^{q-i}} f(y_1,\hdots,y_i,x_1,\hdots,x_{q-i}) \, \mu^{q-i}(\dint(x_1,\hdots,x_{q-i})),
\end{equation}
are square integrable. {In \cite[Section 3]{ReitznerSchulte} it is shown that a square integrable Poisson U-statistic} $S$ has the representation
\begin{equation}\label{eqn:ChaosExpansionUstatistic}
S=\BE S + \sum_{i=1}^q I_i(f_i),
\end{equation}
and {that} its variance is given by
\begin{equation}\label{eqn:VarianceUstatistic}
\BV S = \sum_{i=1}^q i! \|f_i\|^2_{L^2(\mu^i)}\,,
\end{equation}
compare with \eqref{eq:VarianceIntegrals}. The decomposition \eqref{eqn:ChaosExpansionUstatistic} is called the Wiener-It\^o chaos expansion of $S$. We {emphasise} that any square-integrable Poisson functional $F$, i.e.\ any random variable depending on a Poisson process only, has a representation as a sum of its expectation and (possibly infinitely many) multiple Wiener-It\^o integrals. In other words, \eqref{eqn:ChaosExpansionUstatistic} says that square-integrable Poisson U-statistics have a finite Wiener-It\^o chaos expansion. On the other hand, in \cite[Theorem {3.6}]{ReitznerSchulte} it is shown that any square-integrable Poisson functional with a finite Wiener-It\^o chaos expansion, whose kernels $f_i$ are {integrable,} can be written as a sum of finitely many Poisson U-statistics and a constant.  

\subsection{The method of cumulants}\label{subsec:CumulantsApproach}

In this section we present what is called the method of cumulants. We start with the definition of cumulants and by setting up the notation. For real-valued random variables $X_1,\hdots,X_m$, $m\in\N$, the joint characteristic function $\varphi_{X_1,\hdots,X_m}: \R^m\to\C$ is given by
$$
\varphi_{X_1,\hdots,X_m}(t_1,\hdots,t_m):=\BE \exp\bigg({\bf i}\sum_{i=1}^m t_i X_i \bigg)\,,
$$
where ${\bf i}$ is the imaginary unit. The joint cumulant of $X_1,\hdots,X_m$ is then defined as
$$
\cum(X_1,\hdots,X_m):=(-{\bf i})^m\frac{\partial^m}{\partial t_1\hdots \partial t_m}\log\varphi_{X_1,\hdots,X_m}(t_1,\hdots,t_m)\Big|_{t_1=\hdots=t_m=0}\,.
$$
In the following we consider random variables that have finite moments of all orders. This implies that all {joint} cumulants of these random variables are well-defined. Note that the joint cumulants are linear in each coordinate. For a real-valued random variable $X$ and $m\in\N$ we shall write $\cum_m(X){:=}\cum(X,\hdots,X)$ for the $m^{\rm th}$ cumulant of $X$.

Before we can {summarise} the main elements of the method of cumulants, we provide the definition of a moderate deviation principle. Let us recall from \cite{DemboZeitouni} that a sequence $(\BP_n)_{n\in\N}$ of probability measures on a topological space $\cZ$ with $\sigma$-field $\sZ$ satisfies a large deviation principle with speed $s_n\to\infty$ and good rate function {$\cI$} if the level sets $\{z {\in\cZ}:\cI(z)\leq a\}$ are compact for all $0\leq a<\infty$ and if for all $A\in\sZ$,
\begin{equation*}
	-\inf_{z\in\interior(A)}\cI(z)\leq \liminf_{n\to\infty} s_n^{-1}\log\BP_n(A) \leq \limsup_{n\to\infty}s_n^{-1}\log\BP_n(A)\leq -\inf_{z\in\closure(A)}\cI(z)\,,
\end{equation*}
where $\interior(A)$ and $\closure(A)$ stand for the interior and the closure of $A$, respectively. Moreover, a sequence $(X_n)_{n\in\N}$ of random variables satisfies a LDP if their distributions do. We will speak about a moderate deviation principle (MDP) instead of a LDP if the scaling of the involved random variables is between that of a law of large numbers and that of a central limit theorem.

Now, let $(X_n)_{n\in\N}$ be a sequence of square-integrable random variables, $\gamma \geq 0$ be a constant and $(\Delta_n)_{n\in\N}$ be a positive real-valued sequence. To keep the presentation of our results more transparent, we introduce the following shorthand notation and say that $(X_n)_{n\in\N}$ satisfies
\begin{itemize}
	\item $\boldsymbol{{\rm MDP}}(\gamma, (\Delta_n)_{n\in\N})$ if for any {positive real-valued} sequence $(a_n)_{n\in\N}$ with
	$$
	\lim\limits_{n\to\infty} a_n=\infty \quad \text{ and } \quad \lim\limits_{n\to\infty} \frac{a_n}{ \Delta_n^{1/(1+2\gamma)}}=0
	$$  
	the re-scaled random variables $(a_n^{-1}(X_n-\BE X_n)/\sqrt{\BV X_n})_{n\in\N}$ satisfy a moderate deviation principle (MDP) with speed $a_n^2$ and good rate function $\cI(z)=z^2/2${,}
	\item $\boldsymbol{{\rm CI}}(\gamma, (\Delta_n)_{n\in\N})$ if the {Bernstein-type} concentration inequality
	$$
	\BP(|X_n-\BE X_n|\geq z \sqrt{\BV X_n})\leq 2\exp\left(-\frac{1}{4}\min\Big\{\frac{z^2}{2^{1+\gamma}},\,(z\D_n)^{1/(1+\g)}\Big\}\right)
	$$
	holds for all {$n\in\N$ and $z\geq 0$,} 
	\item $\boldsymbol{{\rm NACC}}(\gamma, (\Delta_n)_{n\in\N})$ {if} a normal approximation bound with Cram\'er correction holds, that is, if
	 there exist constants $c_0,c_1,c_2>0$ only depending on $\gamma$ such that for all {$n\in\N$ and $z\in[0,c_0\Delta_n^{1/(1+2\gamma)}]$,}
	$$
	\BP(X_n-\BE X_n\geq z \sqrt{\BV X_n}) = {e^{L_{n,z}^{+}}(1-\Phi(z))\Big(1+c_1\theta^+_{n,z}\frac{1+z}{\Delta_n^{1/(1+2\gamma)}}\Big)}
	$$
	and
	$$
	{\BP(X_n-\BE X_n\leq -z \sqrt{\BV X_n}) = e^{L_{n,z}^{-}}(1-\Phi(z))\Big(1+c_1\theta^-_{n,z}\frac{1+z}{\Delta_n^{1/(1+2\gamma)}}\Big)}
	$$
	{with $\theta_{n,z}^+,\theta_{n,z}^-\in[-1,1]$ and $L_{n,z}^+,L_{n,z}^-\in (-c_2z^3/\Delta_n^{1/(1+2\gamma)},c_2z^3/\Delta_n^{1/(1+2\gamma)})$, where $\Phi$} is the distribution function of a standard Gaussian random variable.
\end{itemize}
The main tool for proving our results are sharp estimates for cumulants and their implications. The next proposition is our main device. {It collects findings taken from the monograph \cite{SaulisBuch}, the paper \cite{DoeringEichelsbacher} and the survey article \cite{DJSsurvey} (see also \cite[Lemma 11]{SchulteThaele2016}).} It {summarises} fine probabilistic estimates, which are available under certain natural bounds on cumulants.

\begin{proposition}[MDP, CI and NACC under cumulant bounds]\label{prop:CumulantBoundImplyMDPandConcentration}
	Let $(X_n)_{n\in\N}$ be a sequence of real-valued random variables such that $\BE[X_n]=0$, $\BE[X_n^2]=1$ and $\BE[|X_n|^m]<\infty$ for all $m\geq 3$. Suppose that there exist a constant $\gamma\geq 0$ and a positive real-valued sequence $(\Delta_n)_{n\in\N}$ such that
	\begin{equation}\label{eq:CumumantBound}
		|\cum_m(X_n)|\leq \frac{(m!)^{1+\gamma}}{\Delta_n^{m-2}}
	\end{equation}
for all $m\geq 3$ and $n\in\N$. Then $(X_n)_{n\in\N}$ satisfies $\boldsymbol{{\rm MDP}}(\gamma,(\Delta_n)_{n\in\N})$, $\boldsymbol{{\rm CI}}(\gamma,(\Delta_n)_{n\in\N})$ and $\boldsymbol{{\rm NACC}}(\gamma,(\Delta_n)_{n\in\N})$.
\end{proposition}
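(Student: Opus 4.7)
The assertion is a compendium of classical results obtained under the so-called Statulevi\v{c}ius-type cumulant condition $(S_\gamma)$, so my plan is not to reprove anything from scratch but to verify carefully that the hypothesis \eqref{eq:CumumantBound} matches the condition used in each of the three referenced works and then invoke the corresponding theorem. Concretely, the bound $|\cum_m(X_n)|\le (m!)^{1+\gamma}\Delta_n^{-(m-2)}$ is exactly the Statulevi\v{c}ius condition with parameter $\gamma$ and scale sequence $\Delta_n$, valid for $m\ge 3$, together with the normalisation $\cum_1(X_n)=0$ and $\cum_2(X_n)=1$ ensured by the assumptions $\BE[X_n]=0$ and $\BE[X_n^2]=1$. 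So the only work is bookkeeping.

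First I would establish the concentration inequality $\boldsymbol{\rm CI}(\gamma,(\Delta_n)_{n\in\N})$. This is Lemma 2.4 in Saulis-Statulevi\v{c}ius \cite{SaulisBuch} (see also Corollary 2.1 of \cite{DJSsurvey}), which states precisely that, under \eqref{eq:CumumantBound}, the two-sided tail satisfies $\BP(|X_n|\ge z)\le 2\exp(-\tfrac14 \min\{z^2/2^{1+\gamma},(z\Delta_n)^{1/(1+\gamma)}\})$ for every $z\ge 0$. Since $X_n$ is already standardised, this is exactly the asserted inequality upon replacing $z$ by $z\sqrt{\BV X_n}/\sqrt{\BV X_n}=z$ in the normalised form.

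Next I would derive the normal approximation with Cram\'er correction $\boldsymbol{\rm NACC}(\gamma,(\Delta_n)_{n\in\N})$. This is the most delicate ingredient; it corresponds to the Cram\'er-type asymptotic expansion given, under the very same cumulant condition, in Lemma 2.3 and Theorem 2.4 of \cite{SaulisBuch}, and summarised as Theorem 2.4 of \cite{DJSsurvey}. The statement there furnishes both relative-error tail estimates of the form $\BP(X_n\ge z)=(1-\Phi(z))e^{L_{n,z}}(1+\text{remainder})$, with $L_{n,z}$ bounded by $c_2 z^3/\Delta_n^{1/(1+2\gamma)}$ and the remainder controlled by $c_1(1+z)/\Delta_n^{1/(1+2\gamma)}$, valid on the range $z\in[0,c_0\Delta_n^{1/(1+2\gamma)}]$. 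Applied to $X_n$ and to $-X_n$ (whose cumulants differ only by the sign factor $(-1)^m$ and so satisfy the same bound \eqref{eq:CumumantBound}), this yields the two displays in the definition of $\boldsymbol{\rm NACC}$.

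Finally, the MDP assertion $\boldsymbol{\rm MDP}(\gamma,(\Delta_n)_{n\in\N})$ follows from Theorem 1.1 of D\"oring-Eichelsbacher \cite{DoeringEichelsbacher}, where it is shown that the Statulevi\v{c}ius cumulant bound \eqref{eq:CumumantBound} implies a moderate deviation principle for $(a_n^{-1}X_n)_{n\in\N}$ with speed $a_n^2$ and rate function $z^2/2$ on any scale $a_n\to\infty$ with $a_n/\Delta_n^{1/(1+2\gamma)}\to 0$. Alternatively, the MDP can be extracted directly from the NACC step above by taking logarithms of the Cram\'er asymptotics: for $z=a_n z_0$ the correction exponent $L_{n,z}$ is of order $a_n^3 z_0^3/\Delta_n^{1/(1+2\gamma)}=o(a_n^2)$ while $\log(1-\Phi(a_n z_0))\sim -a_n^2 z_0^2/2$, which gives the upper and lower MDP bounds by standard arguments. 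The only conceptual obstacle in the whole plan is the bookkeeping of constants and ranges in the Cram\'er step; the MDP and CI parts are then essentially immediate corollaries.
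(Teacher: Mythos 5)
Your proposal is correct and matches the paper exactly: the paper gives no independent proof of Proposition \ref{prop:CumulantBoundImplyMDPandConcentration}, but presents it as a collection of known consequences of the Statulevi\v{c}ius cumulant condition, citing precisely the same three sources (\cite{SaulisBuch}, \cite{DoeringEichelsbacher}, \cite{DJSsurvey}) that you invoke for the CI, NACC and MDP parts respectively. The only caveat is that the exact lemma/theorem numbers you quote from \cite{SaulisBuch} should be double-checked against the monograph, but the attribution and the bookkeeping (standardisation, applying the one-sided expansion to $-X_n$ for the lower tail) are all in order.
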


\begin{remark}\rm 
The cumulant bound \eqref{eq:CumumantBound} immediately implies a central limit theorem for the random variables $(X_n)_{n\in\mathbb{N}}$, as soon as $\Delta_n\to\infty$, if we let $n\to\infty$. In addition, it also delivers a bound for the speed of convergence in terms of the Kolmogorov distance defined as the sup-norm of the difference of the distribution function of $X_n$ and that of a standard Gaussian random variable, see e.g.\ \cite[Corollary 2.1]{SaulisBuch}. However, since this leads for all the random variables we consider in this paper to rates that are weaker than those already available in the existing literature, we have decided not to pursue this direction in this text.
\end{remark}

\subsection{Partitions}\label{subsec:Partitions}

Let $m\in\N$ and let $q_1,\hdots,q_m\in\N$. We define $N_0:=0$, $N_\ell{:=}\sum_{i=1}^\ell q_i$, $\ell\in\{1,\hdots,m\}$, and $N:=N_m$, and put $J_\ell:=\{N_{\ell-1}+1,\hdots,N_\ell\}$, $\ell\in\{1,\hdots,m\}$. 
A partition $\sigma$ of $[N]:=\{1,\hdots, N\}$ is a collection $\{B_1,\hdots,B_k\}$ of {\color{green} } $1\leq k\leq N$ {pairwise} disjoint non-empty sets, called blocks, such that $B_1\cup\ldots\cup B_k=[N]$. The number $k$ of blocks of $\sigma$ is denoted by $|\sigma|$. By $\Pi(q_1,\hdots,q_m)$ we denote the set of all partitions $\sigma$ such that $|B\cap J_\ell|\leq 1$ for all $\ell\in\{1,\hdots,m\}$ and $B\in\sigma$.

\begin{minipage}{0.44\columnwidth}
	\centering
	\includegraphics[width=0.5\columnwidth]{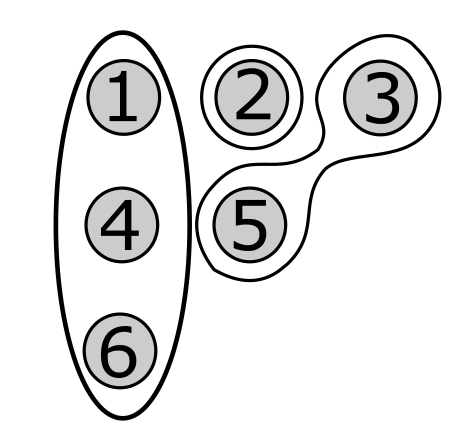}
	\captionof{figure}{\small An element of $\Pi(3,2,1)$.}
	\label{fig:1}
\end{minipage}
\begin{minipage}{0.56\columnwidth}
It is instructive to graphically represent a partition $\sigma\in\Pi(q_1,\ldots,q_m)$ as follows. We imagine the $q_1+\ldots+q_m$ elements of $\{1,\ldots,q_1+\ldots {+}q_m\}$ arranged in an array of $m$ rows, where the numbers $1,\ldots,q_1$ {(i.e.\ the elements of $J_1$)} form the first row, $q_1+1,\ldots,{q_1+}q_2$ {(i.e.\ the elements of $J_2$)} the second row and so on. The blocks of the partition $\sigma$ are then indicated by closed curves, where all elements encircled by the same curve belong to the same block of $\sigma$, see Figure \ref{fig:1}.
\end{minipage}

\medspace

Every partition $\sigma\in\Pi(q_1,\hdots,q_m)$ induces a partition $\sigma^*$ of $\{1,\hdots,m\}$ in the following way: $i,j\in\{1,\hdots,m\}$ are in the same block of $\sigma^*$ whenever there is a block $B\in\sigma$ such that $|B\cap J_i|=1$ and $|B\cap J_j|=1$. Let $\widetilde{\Pi}(q_1,\hdots,q_m)$ be the set of all partitions $\sigma\in\Pi(q_1,\hdots,q_m)$ such that $|\sigma^*|=1$. By $\Pi_{\geq 2}(q_1,\hdots,q_m)$ and
$\widetilde{\Pi}_{\geq 2}(q_1,\hdots,q_m)$ we denote the sets of all $\sigma\in\Pi(q_1,\hdots,q_m)$ and of all $\sigma\in\widetilde{\Pi}(q_1,\hdots,q_m)$ such that $|B|\geq 2$ for all $B\in\sigma$. Finally, we introduce the set $\overline{\Pi}(q_1,\ldots,q_m)$ of all partitions $\sigma\in\Pi(q_1,\ldots,q_m)$ such that for each $\ell\in\{1,\hdots,m\}$ there exists a block $B\in\sigma$ with $|B|\geq 2$ and $B\cap J_\ell \neq\varnothing$. In other words, in each row in the graphical representation of $\sigma$ there exists at least one element, which belongs to some block $B\in\sigma$ with $|B|\geq 2$. In case that $q_1=\hdots=q_m= {q}$ we write $\Pi^m(q)$, $\widetilde{\Pi}^m(q)$, $\widetilde{\Pi}^m_{\geq 2}(q)$ and $\overline{\Pi}^m(q)$ instead of $\Pi(q_1,\hdots,q_m)$, $\widetilde{\Pi}(q_1,\hdots,q_m)$,  $\widetilde{\Pi}_{\geq 2}(q_1,\hdots,q_m)$ and $\overline{\Pi}(q_1,\hdots,q_m)$, respectively.

For functions $f^{(\ell)}: \BX^{q_\ell}\to\R$, $\ell\in\{1,\hdots,m\}$, we define their tensor product $\otimes_{\ell=1}^m f^{(\ell)}: \BX^N\to\R$ by
$$
(\otimes_{\ell=1}^m f^{(\ell)})(x_1,\hdots,x_N):=\prod_{\ell=1}^m f^{(\ell)}(x_{N_{\ell-1}+1},\hdots,x_{N_\ell})\,.
$$
For $\sigma\in\Pi(q_1,\hdots,q_m)$ the function $(\otimes_{\ell=1}^m f^{(\ell)})_\sigma: \BX^{|\sigma|}\to\R$ is obtained by replacing in $(\otimes_{\ell=1}^m f^{(\ell)})$ all variables that belong to the same block of $\sigma$ by a new common variable. Note that this way $(\otimes_{\ell=1}^m f^{(\ell)})_\sigma$ is only defined up to permutations of its arguments. Since in what follows we always integrate with respect to all arguments of $(\otimes_{\ell=1}^m f^{(\ell)})_\sigma$, this does not cause problems.

\section{Main results}\label{sec:MainResults}

\subsection{Moderate deviation estimates}\label{subsec:ModerateDeviations+}

We are now prepared to present {the} main results of this paper. They show that finite sums of multiple stochastic integrals on the Poisson space as well as finite sums of Poisson U-statistics satisfy ${\rm\bf MDP}(\gamma,(\Delta_n)_{n\in\N})$, ${\rm\bf CI}(\gamma,(\Delta_n)_{n\in\N})$ as well as ${\rm\bf NACC}(\gamma,(\Delta_n)_{n\in\N})$, and we determine the parameters $\gamma$ and $(\Delta_n)_{n\in\N}$ in both situations. In fact, it will turn out later that the parameter $\gamma$ we obtain cannot be improved systematically. In the next two theorems we implicitly assume that all occurring integrals are well defined. We start with the result for multiple Wiener-It\^o integrals.

\begin{theorem}[MDP, CI and NACC for multiple integrals]\label{thm:MDP}
Let $(\eta_n)_{n\in\N}$ be a family of Poisson processes over $\sigma$-finite measure spaces $((\BX_n,\cX_n,\mu_n))_{n\in\N}$ and let $f_n^{(i)}\in L^2_s(\mu_n^{q_i})$, $i\in\{1,\hdots,k\}$, $n\in\N$, with distinct $q_1,\hdots,q_k\in\N$ and $k\in\N$ be such that $\sum_{i=1}^k \|f_n^{(i)}\|^2_{L^2(\mu_n^{q_i})} >0$ for all $n\in\N$. Define $q:=\max\{q_1,\hdots,q_k\}$ {and} let $Y_n:=\sum_{i=1}^k I_{q_i}(f_n^{(i)})$ {for $n\in\N$.} Assume that there is a positive real-valued sequence $(\alpha_n)_{n\in\N}$ such that, for any $n\in\N$,
\begin{equation}\label{eq:ConditionIntegrals}
(\BV Y_n)^{-m/2}\bigg|\int_{\BX_n^{|\sigma|}} (\otimes_{\ell=1}^m f_n^{(i_\ell)} )_\sigma \, \dint\mu_n^{|\sigma|}\bigg| \leq \alpha_n^{m-2}
\end{equation}
for all $\sigma\in\widetilde{\Pi}_{\geq 2}(q_{i_1},\hdots,q_{i_m})$, $i_1,\hdots,i_m\in\{1,\hdots,k\}$ and $m\geq 3$. Then $(Y_n)_{n\in\N}$ satisfies $\boldsymbol{{\rm MDP}}(q-1, ({c_{q,k}}\alpha_n^{-1})_{n\in\N})$, $\boldsymbol{{\rm CI}}(q-1, ({c_{q,k}}\alpha_n^{-1})_{n\in\N})$ and $\boldsymbol{{\rm NACC}}(q-1, ({c_{q,k}}\alpha_n^{-1})_{n\in\N})$ with $c_{q,k}:=1/(kq^q)^3$.
\end{theorem}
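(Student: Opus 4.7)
The plan is to reduce the problem to Proposition~\ref{prop:CumulantBoundImplyMDPandConcentration} by establishing the cumulant bound $|\cum_m(X_n)|\le (m!)^q/\Delta_n^{m-2}$ (recall $1+\gamma=q$) for the normalized variables $X_n:=Y_n/\sqrt{\BV Y_n}$ with $\Delta_n=c_{q,k}\alpha_n^{-1}$. Since $\BE I_{q_i}(f)=0$, we have $\BE X_n=0$ and $\BV X_n=1$, so all that is needed is the cumulant estimate.

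First I would expand $\cum_m(Y_n)$ by the multilinearity of joint cumulants to obtain
\begin{equation*}
\cum_m(Y_n)=\sum_{(i_1,\ldots,i_m)\in[k]^m}\cum\bigl(I_{q_{i_1}}(f_n^{(i_1)}),\ldots,I_{q_{i_m}}(f_n^{(i_m)})\bigr).
\end{equation*}
Next I would apply the cumulant product formula for multiple Wiener-It\^o integrals on the Poisson space (the version to be proved in Subsection~\ref{subsec:ProductFormulas}), which represents each joint cumulant as a sum over connected diagrams with blocks of size $\ge 2$:
\begin{equation*}
\cum\bigl(I_{q_{i_1}}(f_n^{(i_1)}),\ldots,I_{q_{i_m}}(f_n^{(i_m)})\bigr)=\sum_{\sigma\in\widetilde{\Pi}_{\geq 2}(q_{i_1},\ldots,q_{i_m})}\int_{\BX_n^{|\sigma|}}(\otimes_{\ell=1}^m f_n^{(i_\ell)})_\sigma\,\dint\mu_n^{|\sigma|}.
\end{equation*}
Dividing by $(\BV Y_n)^{m/2}$ and invoking the hypothesis~\eqref{eq:ConditionIntegrals} termwise, each integrand contributes at most $\alpha_n^{m-2}$, which yields
\begin{equation*}
|\cum_m(X_n)|\le \alpha_n^{m-2}\sum_{(i_1,\ldots,i_m)\in[k]^m}|\widetilde{\Pi}_{\geq 2}(q_{i_1},\ldots,q_{i_m})|.
\end{equation*}

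The main obstacle is the remaining combinatorial step: one must verify
\begin{equation*}
\sum_{(i_1,\ldots,i_m)\in[k]^m}|\widetilde{\Pi}_{\geq 2}(q_{i_1},\ldots,q_{i_m})|\le (m!)^{q}(kq^{q})^{3(m-2)},
\end{equation*}
which combined with the previous inequality delivers exactly $|\cum_m(X_n)|\le (m!)^q\alpha_n^{m-2}/c_{q,k}^{m-2}=(m!)^q/\Delta_n^{m-2}$. The index sum contributes the factor $k^m$, and each $|\widetilde{\Pi}_{\ge 2}(q_{i_1},\ldots,q_{i_m})|$ is bounded by the number of partitions of a set of at most $mq$ elements, which by a Stirling comparison satisfies $(mq)!\le C(m!)^q q^{mq}$. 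Plugging in, the target inequality reduces to checking $k^m C q^{mq}\le (kq^q)^{3(m-2)}$ for all $m\ge 3$; this is the worst at $m=3$ and motivates the specific choice $c_{q,k}=1/(kq^q)^3$, with slack absorbing $C$ as $m$ grows. After verifying this uniformly in $m\ge 3$, Proposition~\ref{prop:CumulantBoundImplyMDPandConcentration} applied with $\gamma=q-1$ and $\Delta_n=c_{q,k}\alpha_n^{-1}$ simultaneously yields $\boldsymbol{{\rm MDP}}$, $\boldsymbol{{\rm CI}}$ and $\boldsymbol{{\rm NACC}}$ as claimed.

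I expect the entire proof to be short once the product formula from Subsection~\ref{subsec:ProductFormulas} is in hand; the only nontrivial arithmetic is the combinatorial estimate above, which is purely a counting exercise with no analytic content but pins down the exact value of the constant $c_{q,k}$.
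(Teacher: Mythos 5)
Your proposal is correct and follows essentially the same route as the paper: multilinearity of joint cumulants, the cumulant formula of Theorem \ref{thm:CumulantFormulaMultipleIntegrals}, a count of $\widetilde{\Pi}_{\geq 2}(q_{i_1},\hdots,q_{i_m})$ of the form $q^{qm}(m!)^q$, and then Proposition \ref{prop:CumulantBoundImplyMDPandConcentration} with $\gamma=q-1$. The only (harmless) deviation is in how that count is obtained -- you go via the Bell number, $B_{mq}\leq (mq)!\leq (m!)^q q^{mq}$, whereas the paper's Proposition \ref{Prop:tildePi} derives the same bound by a row-by-row recursive construction of the partitions; both yield $k^m q^{qm}(m!)^q\leq (m!)^q (kq^q)^{3(m-2)}$ for $m\geq 3$, which is exactly the paper's final estimate.
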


We can compare the assumptions of Theorem \ref{thm:MDP} with {those} of the corresponding result for multiple Wiener-It\^o integrals on the Wiener space proved in \cite{SchulteThaele2016}. In the latter case, it was sufficient to impose a condition on the \textit{fourth} cumulant only in order to bound all higher-order cumulants and to deduce {\bf MDP}, {\bf CI} and {\bf NACC}. In contrast, our condition \eqref{eq:ConditionIntegrals} (and also the condition \eqref{eq:ConditionUstatistics} in case of Poisson U-statistics below) impose restrictions to {\it all} cumulants of order $m\geq 3$ simultaneously. In view of the complicated combinatorial structure and in view of the absent hypercontractivity property on the Poisson space, this is unavoidable by our method, which is based on sharp cumulant bounds.

\medskip

Next, we shall discuss a version of Theorem \ref{thm:MDP} for sums of Poisson U-statistics.

\begin{theorem}[MDP, CI and NACC for Poisson U-statistics]\label{thm:MDP2}
Let $(\eta_n)_{n\in\N}$ be a family of Poisson processes over $\sigma$-finite measure spaces $((\BX_n,\cX_n,\mu_n))_{n\in\N}$ and let $f_n^{(i)}: \BX_n^{q_i}\to \R $, $i\in\{1,\hdots,k\}$, $n\in\N$, with distinct $q_1,\hdots,q_k\in\N$ and $k\in\N$, be measurable and satisfy \eqref{eqn:ConditionKernelUstatistic} and $\sum_{i=1}^k \|f_n^{(i)}\|^2_{L^2(\mu_n^{q_i})} >0$ for all $n\in\N$. Define $q:=\max\{q_1,\hdots,q_k\}$ and the random variables
$$
Z_n:=\sum_{i=1}^k S_n^{(i)}\qquad\text{with}\qquad S_n^{(i)}:=\sum_{(x_1,\hdots,x_{q_i})\in\eta^{q_i}_{n,\neq}} f_n^{(i)}(x_1,\hdots,x_{q_i}), \quad i\in\{1,\hdots,k\},
$$
for $n\in\N$. Assume that there is a positive real-valued sequence $(\beta_n)_{n\in\N}$ such that, for any $n\in\N$,
\begin{equation}\label{eq:ConditionUstatistics}
(\BV Z_n)^{-m/2} \bigg|\int_{\BX_n^{|\sigma|}} (\otimes_{\ell=1}^m f_n^{(i_\ell)} )_\sigma \, \dint\mu_n^{|\sigma|}\bigg| \leq \beta_n^{m-2}
\end{equation}
for all $\sigma\in\widetilde{\Pi}(q_{i_1},\hdots,q_{i_m})$, $i_1,\hdots,i_m\in\{1,\hdots,k\}$ and $m\geq 3$. Then $(Z_n)_{n\in\N}$ satisfies $\boldsymbol{{\rm MDP}}(q-1, ({c_{q,k}}\beta_n^{-1})_{n\in\N})$, $\boldsymbol{{\rm CI}}(q-1, ({c_{q,k}}\beta_n^{-1})_{n\in\N})$ and $\boldsymbol{{\rm NACC}}(q-1, ({c_{q,k}}\beta_n^{-1})_{n\in\N})$ with $c_{q,k}:=1/(kq^q)^3$.
\end{theorem}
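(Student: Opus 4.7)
The strategy is to verify the cumulant bound \eqref{eq:CumumantBound} from Proposition \ref{prop:CumulantBoundImplyMDPandConcentration} for the normalised random variables $X_n := (Z_n - \BE Z_n)/\sqrt{\BV Z_n}$ with parameters $\gamma = q-1$ and $\Delta_n = c_{q,k}\beta_n^{-1}$. Once this is in place, the $\boldsymbol{{\rm MDP}}$, $\boldsymbol{{\rm CI}}$ and $\boldsymbol{{\rm NACC}}$ conclusions follow directly by invoking that proposition, since $\BE[X_n]=0$ and $\BE[X_n^2]=1$ by construction and all higher moments are finite by the standing integrability assumption.

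The first step is to apply the product formula for joint cumulants of Poisson U-statistics, to be established in Subsection \ref{subsec:ProductFormulas}, which asserts
$$
\cum(S_n^{(i_1)},\ldots,S_n^{(i_m)}) = \sum_{\sigma\in\widetilde{\Pi}(q_{i_1},\ldots,q_{i_m})} \int_{\BX_n^{|\sigma|}} (\otimes_{\ell=1}^m f_n^{(i_\ell)})_\sigma \, \dint\mu_n^{|\sigma|}.
$$
The crucial difference from Theorem \ref{thm:MDP} is that the $S_n^{(i)}$ are integrals against the \emph{uncompensated} factorial measures $\eta_n^{(q_i)}$, so singleton blocks are admissible and the sum ranges over the full set $\widetilde{\Pi}(q_{i_1},\ldots,q_{i_m})$ rather than only $\widetilde{\Pi}_{\geq 2}(q_{i_1},\ldots,q_{i_m})$. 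Combining this identity with the multilinearity of joint cumulants, $\cum_m(Z_n) = \sum_{i_1,\ldots,i_m=1}^k \cum(S_n^{(i_1)},\ldots,S_n^{(i_m)})$, then dividing by $(\BV Z_n)^{m/2}$ and applying hypothesis \eqref{eq:ConditionUstatistics} to each integral term by term, I obtain
$$
|\cum_m(X_n)| \leq k^m\, C(q,m)\, \beta_n^{m-2}, \qquad C(q,m) := \max_{i_1,\ldots,i_m\in\{1,\ldots,k\}} |\widetilde{\Pi}(q_{i_1},\ldots,q_{i_m})|.
$$

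The remaining step is a routine combinatorial estimate: using the crude Bell-number bound $C(q,m) \leq B_{qm} \leq (qm)^{qm}$ together with Stirling's approximation for $(m!)^q$, one checks that
$$
k^m\, C(q,m) \leq (m!)^q (kq^q)^{3(m-2)}
$$
holds for all $m\geq 3$. Rearranging shows that $|\cum_m(X_n)|$ is bounded by $(m!)^{1+(q-1)}/\Delta_n^{m-2}$ with $\Delta_n = c_{q,k}\beta_n^{-1}$ and $c_{q,k} = 1/(kq^q)^3$, which is exactly \eqref{eq:CumumantBound} with $\gamma = q-1$, so Proposition \ref{prop:CumulantBoundImplyMDPandConcentration} applies and yields the three claimed properties.

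The main obstacle lies not in this high-level bookkeeping but in the product formula itself. Under the comparatively weak integrability hypothesis \eqref{eqn:ConditionKernelUstatistic} together with the bound \eqref{eq:ConditionUstatistics}, one has to verify that all mixed integrals arising in the partition expansion of the joint moments $\BE[S_n^{(i_1)}\cdots S_n^{(i_m)}]$ are absolutely convergent, so that the classical moment-to-cumulant M\"obius inversion on the partition lattice may be applied term by term and only the contribution of the connected partitions in $\widetilde{\Pi}(q_{i_1},\ldots,q_{i_m})$ survives. This strengthens earlier versions of such product formulas in the literature, which typically demand stronger integrability, and forms the technical core of the argument; in the present proof it is invoked as a black box.
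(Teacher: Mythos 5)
Your overall strategy coincides with the paper's: the paper proves Theorem \ref{thm:MDP2} by repeating the proof of Theorem \ref{thm:MDP}, i.e.\ multilinearity of joint cumulants, the cumulant formula of Theorem \ref{thm:CumulantFormulaUStat} (with the larger partition class $\widetilde{\Pi}$ replacing $\widetilde{\Pi}_{\geq 2}$), the hypothesis \eqref{eq:ConditionUstatistics} applied term by term, a count of the admissible partitions, and finally Proposition \ref{prop:CumulantBoundImplyMDPandConcentration}. Treating the product formula as a black box is also consistent with the paper, which assumes all occurring integrals are well defined.

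However, your combinatorial step is wrong as stated. From the bound $C(q,m)\leq B_{qm}\leq (qm)^{qm}$ the inequality $k^m C(q,m)\leq (m!)^q (kq^q)^{3(m-2)}$ does \emph{not} follow; the chain already fails at $m=3$. For instance, with $q=2$, $k=1$, $m=3$ you would need $(qm)^{qm}=6^6=46656\leq (3!)^2\cdot 4^3=2304$, which is false. The reason is that $(qm)^{qm}$ exceeds $q^{qm}(m!)^q$ by roughly a factor $e^{qm}$ (Stirling), and this exponential surplus cannot be absorbed into $(kq^q)^{3(m-2)}$ with the \emph{fixed} constant $c_{q,k}=1/(kq^q)^3$ demanded by the statement. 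What is actually needed is the sharper count $|\widetilde{\Pi}(q_{i_1},\hdots,q_{i_m})|\leq |\Pi^m(q)|\leq q^{qm}(m!)^q$, which the paper establishes in Proposition \ref{Prop:tildePi} by a recursive construction (adding one row of $q$ elements at a time, each of which either joins one of the at most $qm$ existing blocks or opens a new one). With that bound, $k^m q^{qm}=(kq^q)^m\leq (kq^q)^{3(m-2)}$ for $m\geq 3$ gives exactly $|\cum_m(X_n)|\leq (m!)^q\,(q^{3q}k^3\beta_n)^{m-2}$ and hence the claimed $\Delta_n=c_{q,k}\beta_n^{-1}$. If you insist on going through Bell numbers, you must use $B_{qm}\leq (qm)!\leq q^{qm}(m!)^q$ (the last step via the multinomial coefficient $\binom{qm}{m,\hdots,m}\leq q^{qm}$); the cruder $(qm)^{qm}$ genuinely breaks the argument.
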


\bigskip

\begin{remark}\rm
{The assumption $\sum_{i=1}^k \|f_n^{(i)}\|^2_{L^2(\mu_n^{q_i})} >0$ for all $n\in\N$ in Theorems \ref{thm:MDP} and \ref{thm:MDP2} ensures that, for all $n\in\N$, $\BV Y_n>0$ and $\BV Z_n>0$, respectively. Indeed, by \eqref{eq:VarianceIntegrals}, we have
$$
\BV Y_n = \sum_{i=1}^k q_i! \|f_n^{(i)}\|^2_{L^2(\mu_n^{q_i})} \geq \sum_{i=1}^k \|f_n^{(i)}\|^2_{L^2(\mu_n^{q_i})} >0.
$$
In case of $Z_n$ there exists a unique $i_n\in\{1,\hdots,k\}$ such that $q_{i_n}=\max\{ q_1,\hdots,q_k: \|f_n^{(i)}\|^2_{L^2(\mu_n^{q_i})} >0 \}$. From \eqref{eqn:ChaosExpansionUstatistic} we deduce that the $q_{i_n}$-th kernel of the chaos expansion of $Z_n$ is $f_n^{(i_n)}$, whence, by \eqref{eq:VarianceIntegrals},
$
\BV Z_n \geq q_{i_n}! \|f_n^{(i_n)}\|^2_{L^2(\mu_n^{q_{i_n}})}>0.
$}
\end{remark}

\begin{remark}\rm 
Moderate deviation principles and also concentration inequalities are well known in the case of `classical' U-statistics based on {fixed numbers} of i.i.d.\ random variables, see e.g.\ \cite{EichelsbacherSchmock} and \cite{MajorBook}. Although the classical U-statistics and the Poisson U-statistics we consider are close in the $L^2$-sense by a result of Dynkin and Mandelbaum \cite{DynkinMandelbaum}, this is not sufficient to push the classical results to the Poisson case since we investigate U-statistics on an exponential scale. In addition we remark that -- in sharp contrast to typical assumptions imposed on classical U-statistics -- the random variables $Y_n$ and $Z_n$ considered in Theorem \ref{thm:MDP} and \ref{thm:MDP2} satisfy $\mathbb{E}[e^{sY_n}]=\mathbb{E}[e^{sZ_n}]=\infty$ for any $s>0$, provided that $f_n^{(i)}\geq 0$ for all $i\in\{1,\hdots,k\}$ and that for some ${i}\in\{1,\ldots,k\}$, $q_i\geq 2$ and $\|f_n^{(i)}\|_{L^2(\mu_n^{q_i})}>0$ (see \cite[Corollary 2]{LPST}). 
\end{remark}

Theorem \ref{thm:MDP} and Theorem \ref{thm:MDP2} imply a moderate deviation principle for a range of scales $(a_n)_{n\in\N}$ `close' to that in the related central limit theorem, which in turn would correspond to the choice $a_n=1$ for all $n\in\N$. We shall now discuss whether or not it is possible in general to enlarge this range of scales. To this end we use a simple example based on a sum of independent and identically distributed random variables. Let us denote for $q\in\{0,1,2,\ldots\}$ by {$H_q$} the $q^{\rm th}$ Poisson-Charlier polynomial for the Poisson distribution with parameter $1$. This family of orthogonal polynomials is recursively defined by
$$
H_0(x):=1\qquad\text{and}\qquad H_{q+1}(x):=xH_q(x-1)-H_q(x)\,,\qquad x\in\R,
$$
for integers $q\geq 0$, see e.g.\ \cite[Equation (10.0.2)]{PeTa}. For example,
\begin{align*}
H_1(x) &= x-1,\\
H_2(x) &= x^2-3x+1,\\
H_3(x) &= x^3-6x^2+8x-1,\\
H_4(x) &= x^4-10x^3+29x^2-24x+1.
\end{align*}
In particular, {$H_q$} is a polynomial of degree $q$ with leading coefficient equal to $1$.

\begin{theorem}\label{thm:CharlierPolynomials}
Fix $q\in\N$ and let $(Z_k)_{k\in\N}$ be a sequence of independent and Poisson distributed random variables with parameter $1$. For each $n\in\N$ define $S_n:=\sum_{k=1}^nH_q(Z_k)$ and let $(a_n)_{n\in\N}$ be a sequence of positive real numbers such that $\lim_{n\to\infty}a_n=\infty$ and $\lim_{n\to\infty}a_n/\sqrt{n}=0$. 
\begin{itemize}
\item[a)] Assume that
$$
\lim_{n\to\infty}a_n^{-2+1/q}n^{1/(2q)}\log(a_n\sqrt{n})=\infty.
$$
Then the sequence of random variables $\big(\frac{S_n}{a_n\,\sqrt{nq!}}\big)_{n\in\N}$ satisfies a MDP with speed $a_n^2$ and good rate function $\mathcal{I}(z)=z^2/2$.
\item[b)] Assume that
$$
\lim_{n\to\infty}a_n^{-2+1/q}n^{1/(2q)}\log(a_n\sqrt{n})<\infty.
$$
Then, the sequence of random variables $\big(\frac{S_n}{a_n\,\sqrt{nq!}}\big)_{n\in\N}$ cannot satisfy a MDP with a good rate function $\mathcal{I}(z)$ satisfying $\mathcal{I}(z)>0$ for $z\neq 0$ and with $\mathcal{I}(z)\to\infty$, as $z\to\pm\infty$.
\end{itemize}
\end{theorem}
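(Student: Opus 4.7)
My approach splits along the two parts: for (a) I combine an integral representation of each $H_q(Z_k)$ with the cumulant method of Proposition \ref{prop:CumulantBoundImplyMDPandConcentration} plus sharp Bell-number asymptotics, and for (b) I exhibit a ``single big jump'' lower bound on the tail of $S_n$ which is incompatible with any good rate function at the Gaussian speed prescribed by an MDP.

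\textbf{Part (a).} Let $(\BX,\mu)$ contain pairwise disjoint sets $A_1,A_2,\ldots$ of unit $\mu$-mass and let $\eta$ be Poisson on $\BX$ with intensity $\mu$, so that $Z_k := \eta(A_k)\sim \mathrm{Poi}(1)$ are independent. By induction on $q$ (comparing the recursion defining $H_q$ with the combinatorial expansion defining $I_q$) one obtains $H_q(Z_k)=I_q(\mathbf{1}_{A_k}^{\otimes q})$. Setting $f_n:=\sum_{k=1}^n \mathbf{1}_{A_k}^{\otimes q}$, linearity gives $S_n=I_q(f_n)$ and orthogonality gives $\BV S_n=q!\,n$. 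Because the $A_k$'s are disjoint, for every $\sigma\in\widetilde{\Pi}^m_{\geq 2}(q)$ the only non-vanishing contribution in $(\otimes^m f_n)_\sigma$ is the diagonal $k_1=\dots=k_m$, so
\begin{equation*}
\int_{\BX^{|\sigma|}}(\otimes^m f_n)_\sigma\,\dint\mu^{|\sigma|}=n \qquad\text{and therefore}\qquad \cum_m(S_n)=n\,|\widetilde{\Pi}^m_{\geq 2}(q)|.
\end{equation*}
Applying Theorem \ref{thm:MDP} with $\alpha_n\propto n^{-1/2}$ already yields the polynomial MDP range $a_n\ll n^{1/(4q-2)}$. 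To pick up the logarithmic improvement one refines: the trivial bound $|\widetilde{\Pi}^m_{\geq 2}(q)|\leq B_{qm}$ (the Bell number) combined with the Moser--Wyman asymptotics $B_{qm}\leq D_q^m (m!)^q(\log m)^{-qm}$ gives
\begin{equation*}
|\cum_m(W_n)|\leq D_q^m (\log m)^{-qm} (m!)^q\,(q!)^{-m/2} n^{-(m-2)/2},\qquad W_n:=S_n/\sqrt{nq!}.
\end{equation*}
Feeding this into the Saulis--Statulevi\v{c}ius MDP template underlying Proposition \ref{prop:CumulantBoundImplyMDPandConcentration} -- but with an $m$-dependent effective $\Delta_n$ -- and optimizing the cumulant order $m$ against the scale $a_n$ yields the MDP at speed $a_n^2$ with rate $z^2/2$ exactly under the hypothesis $a_n^{-2+1/q}n^{1/(2q)}\log(a_n\sqrt{n})\to\infty$.

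\textbf{Part (b).} Fix $z>0$ and set $z_n:=\lceil(2a_n\sqrt{nq!}\,z)^{1/q}\rceil$. Since $H_q$ has leading coefficient $1$, $H_q(z_n)\geq 2a_n\sqrt{nq!}\,z$ for all sufficiently large $n$. Write $R_n:=\sum_{k=2}^n H_q(Z_k)$; by Chebyshev, $\BP(R_n\geq -a_n\sqrt{nq!}\,z)\geq 1-\BV R_n/(a_n\sqrt{nq!}\,z)^2\geq 1/2$ eventually. Independence of $Z_1$ and $R_n$ gives
\begin{equation*}
\BP\Big(\tfrac{S_n}{a_n\sqrt{nq!}}\geq z\Big)\geq \BP(Z_1=z_n)\,\BP\big(R_n\geq -a_n\sqrt{nq!}\,z\big)\geq \tfrac{1}{2\mathrm{e}\,z_n!},
\end{equation*}
and Stirling produces $\log\BP(\tfrac{S_n}{a_n\sqrt{nq!}}\geq z)\geq -\frac{1+o(1)}{q}(a_n\sqrt n)^{1/q}\log(a_n\sqrt n)$. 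Under the hypothesis of (b), the ratio $a_n^{-2}(a_n\sqrt n)^{1/q}\log(a_n\sqrt n)$ is bounded above by a finite constant $C_0$, so $a_n^{-2}\log\BP(\tfrac{S_n}{a_n\sqrt{nq!}}\geq z)\geq -C_0/q$ uniformly in $z$. If the rescaled sequence satisfied an MDP at speed $a_n^2$ with a good rate $\cI$, the MDP upper bound applied to $[z,\infty)$ would force $\inf_{y\geq z}\cI(y)\leq C_0/q$ for every $z$, contradicting $\cI(y)\to\infty$ as $y\to\infty$. The same argument applied to the left tail handles $z\to -\infty$.

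The main obstacle is tracking the logarithmic factor in (a): Proposition \ref{prop:CumulantBoundImplyMDPandConcentration} as a black box only gives the polynomial scale, and recovering the sharper scale requires inspecting the Saulis--Statulevi\v{c}ius proof and balancing the cumulant order $m$ against $a_n$ in the presence of the $(\log m)^{-qm}$ factor. The combinatorial inequality $|\widetilde{\Pi}^m_{\geq 2}(q)|\leq B_{qm}$, the Moser--Wyman estimate for Bell numbers, and the variance control for $R_n$ used in (b) are otherwise standard.
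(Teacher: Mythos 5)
There are genuine gaps in both parts, and the route you take is not the one the paper uses: the paper derives both a) and b) from the necessary \emph{and sufficient} condition of Arcones and Eichelsbacher--L\"owe for an MDP of i.i.d.\ partial sums, namely that $\limsup_n a_n^{-2}\bigl(\log n+\log\BP(|H_q(Z_1)|>a_n\sqrt{nq!})\bigr)=-\infty$, and then simply computes the Poisson tail of $H_q(Z_1)$; no cumulants are involved.

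For part a), your cumulant argument only delivers the polynomial range $a_n=o(n^{1/(4q-2)})$, whereas the hypothesis of a) permits $a_n$ larger by a factor of order $(\log n)^{q/(2q-1)}$, so the entire content of a) beyond the already-known range rests on the unexecuted step ``feed an $m$-dependent $\Delta_n$ into the Saulis--Statulevi\v{c}ius template and optimise $m$.'' Proposition \ref{prop:CumulantBoundImplyMDPandConcentration} requires a bound uniform in $m$ of the form $(m!)^{1+\gamma}\Delta_n^{-(m-2)}$; with a $(\log m)^{-qm}$ correction one would have to redo the whole Cram\'er-series analysis, and it is not at all clear (and not argued) that this recovers the exact threshold $a_n^{-2+1/q}n^{1/(2q)}\log(a_n\sqrt n)\to\infty$. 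As written, a) is a sketch, not a proof. For part b), your big-jump lower bound is essentially the same tail computation the paper performs, but the deduction of a contradiction is flawed: since $z_n\sim(2a_n\sqrt{nq!}\,z)^{1/q}$, one has $\log\BP(Z_1=z_n)\sim-\tfrac{1}{q}(2\sqrt{q!}\,z)^{1/q}(a_n\sqrt n)^{1/q}\log(a_n\sqrt n)$, so the constant in your lower bound grows like $z^{1/q}$ and the bound is \emph{not} uniform in $z$. What you actually obtain is $\inf_{y\geq z}\cI(y)\leq c\,z^{1/q}$ for all $z$, which does not contradict the existence of a good rate function with $\cI(y)>0$ for $y\neq0$ and $\cI(y)\to\infty$ (e.g.\ $\cI(y)=\varepsilon|y|^{1/q}$ survives your argument). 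Ruling out \emph{every} such rate function is exactly the nontrivial necessity direction of the Arcones/Eichelsbacher--L\"owe theorem, which you would need to either cite or reprove.
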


To discuss the relation between Theorem \ref{thm:CharlierPolynomials}, which is not derived by the method of cumulants but by findings from \cite{Arcones,EichelsbacherLoewe} for sums of i.i.d.\ random variables, and Theorem \ref{thm:MDP}, we let $\eta$ be a Poisson process over a $\sigma$-finite measure space $(\BX,\cX,\mu)$ and $Z_1$ be a Poisson random variable with parameter $1$. Then, for every $q\in\N$ and every $B\in\cX$ with $\mu(B)=1$ we have that the random variable $H_q(Z_1)$ and the multiple Wiener-It\^o integral $I_q(g_B)$ with $g_B(z_1,\ldots,z_q):={\bf 1}_B^{\otimes q}(z_1,\ldots,z_q):={\bf 1}_B(z_1)\cdots{\bf 1}_B(z_q)$ are identically distributed, see \cite[Proposition 10.0.2]{PeTa}. Thus, for each $n\in\mathbb{N}$, $S_n$ has the same distribution as $I_q(f_n)$ with $f_n:=\sum_{i=1}^n g_{B_i}$ with pairwise disjoint measurable subsets $(B_i)_{i\in\mathbb{N}}$ of $\mathbb{X}$ with $\mu(B_i)=1$ for all $i\in\mathbb{N}$. For $m\in\mathbb{N}$ with $m\geq 3$ and $\sigma\in\widetilde{\Pi}^{m}_{\geq 2}(q)$, we obtain
$$
\int_{\BX^{|\sigma|}}(\otimes_{\ell=1}^m f_n)_\sigma\,\dint\mu^{|\sigma|} = \sum_{i_1,\hdots,i_m=1}^n \int_{\BX^{|\sigma|}}(\otimes_{\ell=1}^m g_{B_{i_\ell}})_\sigma\,\dint\mu^{|\sigma|} = \sum_{i=1}^n \int_{\BX^{|\sigma|}}(\otimes_{\ell=1}^m g_{B_{i}})_\sigma\,\dint\mu^{|\sigma|}=n,
$$
where we used the construction of the $(g_{B_i})_{i\in\mathbb{N}}$, the pairwise disjointness of $(B_i)_{i\in\mathbb{N}}$ and $\mu(B_i)=1$ for $i\in\mathbb{N}$. Consequently, the random variables $(S_n/\sqrt{n q!})_{n\in\mathbb{N}}$ satisfy \eqref{eq:ConditionIntegrals} with $\alpha_n:=\frac{1}{\sqrt{n}}$. So, Theorem \ref{thm:MDP} implies that if $(a_n)_{n\in\N}$ is a sequence of positive real numbers such that
\begin{equation}\label{eqn:Condition_Charlier}
\lim_{n\to\infty}a_n=\infty\qquad\text{and}\qquad\lim_{n\to\infty}{a_n\over\alpha_n^{-1/(2q-1)}}=\lim_{n\to\infty}{a_n\over n^{1/(4q-2)}}=0,
\end{equation}
then the sequence of random variables $(S_n/(a_n\sqrt{nq!}))_{n\in\mathbb{N}}$ satisfies a MDP with speed $a_n^2$ and good rate function $\mathcal{I}(z)=z^2/2$. However, the growth condition on $a_n$ we just obtained by means of the method of cumulants coincides up to subpolynomial factors with the optimal one from Theorem \ref{thm:CharlierPolynomials}, which in turn is based on a different method. Indeed, it is easy to verify that the condition of a) in Theorem \ref{thm:CharlierPolynomials} is satisfied in case of \eqref{eqn:Condition_Charlier}, while the condition of b) holds if
$$
\lim_{n\to\infty}{a_n\over n^{\varepsilon+1/(4q-2)}}=\infty
$$
for some $\varepsilon>0$. In other words this means that the polynomial order of the range of scalings in Theorem \ref{thm:MDP} and, thus, presumably also in Theorem \ref{thm:MDP2} cannot be improved systematically. On the other hand, this does not necessarily exclude the possibility that for special choices of functions $f_n$ the MDP might hold beyond this range of scales.

We also note in this context that such an optimality result is not available for sequences of multiple stochastic integrals on a Wiener space. In fact, it has been argued in \cite{SchulteThaele2016} that there exists a non-trivial interval of scales $a_n$ for which it is not clear whether or not a moderate deviation principle is satisfied in general. We find it rather remarkable that despite the much more involved combinatorial nature of stochastic integrals on Poisson spaces, such a gap does not exist in this set-up.

\subsection{Product formulas}\label{subsec:ProductFormulas}

One of the main devices for deriving sharp bounds on cumulants of multiple stochastic integrals and Poisson U-statistics are explicit combinatorial formulas for the (joint) cumulants of such random variables. The moment and cumulant formulas provided in this section are known, but we were able to derive them under weaker - sometimes even minimal - integrability assumptions in comparison with the existing literature. We consider the same framework as in Section \ref{sec:preliminaries}.

\begin{theorem}[Moment and cumulant formulas for stochastic integrals]\label{thm:CumulantFormulaMultipleIntegrals}
	Let $m\in\N$, let $\widetilde{m}\in\mathbb{N}$ with $\widetilde{m}\geq m$ be even and let $f^{(\ell)}\in L^2_s(\mu^{q_\ell})$ with $q_\ell\in\N$, $\ell\in\{1,\hdots,m\}$, be such that
	\begin{align}
	\int_{\BX^{|\sigma|}}|(\otimes_{\ell=1}^{\widetilde{m}}f^{(i)})_\sigma|\,\dint\mu^{|\sigma|} &<\infty,\qquad \sigma\in\Pi_{\geq 2}^{\widetilde{m}}(q_i),\,i\in\{1,\ldots,m\},\label{eq:CumThmAss1}\\
	\int_{\BX^{|\sigma|}} \big|(\otimes_{\ell=1}^m f^{(\ell)})_\sigma \big| \, \dint\mu^{|\sigma|} &<\infty,\qquad \sigma \in {\Pi}_{\geq 2}(q_1,\hdots,q_m)\,.\label{eq:CumThmAss2}
	\end{align}
	Then, 
	\begin{align*}
	\BE\Big[\prod_{\ell=1}^mI_{q_\ell}(f^{(\ell)})\Big] &=\sum_{\sigma\in{\Pi}_{\geq 2}(q_1,\hdots,q_m)} \int_{\BX^{|\sigma|}} (\otimes_{\ell=1}^m f^{(\ell)})_\sigma \, \dint\mu^{|\sigma|}\,,\\
	\cum(I_{q_1}({f^{(1)}}),\hdots,I_{q_m}({f^{(m)}}))&=\sum_{\sigma\in\widetilde{\Pi}_{\geq 2}(q_1,\hdots,q_m)} \int_{\BX^{|\sigma|}} (\otimes_{\ell=1}^m f^{(\ell)})_\sigma \, \dint\mu^{|\sigma|}\,.
	\end{align*}
\end{theorem}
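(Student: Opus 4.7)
The proof splits into three phases: (a) verify both identities when the kernels are elementary enough that everything can be computed by hand, (b) bootstrap to the general case by an $L^1$-approximation argument powered by the integrability assumptions, and (c) reduce the cumulant identity to the moment identity via the Leonov--Shiryaev inversion.

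For phase (a), the plan is to take each $f^{(\ell)}$ to be a finite linear combination of tensor products of indicators $\mathbf{1}_{B_1}\otimes\cdots\otimes\mathbf{1}_{B_{q_\ell}}$, where the $B_i$ are pairwise disjoint sets of finite $\mu$-measure. For such kernels, $I_{q_\ell}(f^{(\ell)})$ becomes a polynomial in the independent centred Poisson variables $\hat\eta(B)=\eta(B)-\mu(B)$, so $\BE\bigl[\prod_{\ell=1}^m I_{q_\ell}(f^{(\ell)})\bigr]$ is computed using the classical cumulant/moment formulas for Poisson variables, in which only partitions with no singleton blocks contribute. A direct combinatorial bookkeeping shows that the contribution of each configuration matches exactly one $\sigma\in\Pi_{\geq 2}(q_1,\ldots,q_m)$ and yields $\int(\otimes_{\ell=1}^m f^{(\ell)})_\sigma\,\dint\mu^{|\sigma|}$; this is essentially the computation from \cite{PeTa, Surgailis} and requires no integrability beyond finiteness of the sets involved.

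For phase (b), I would choose, for each $\ell$, a sequence of elementary symmetric kernels $f^{(\ell)}_n$ with $|f^{(\ell)}_n|\leq |f^{(\ell)}|$ pointwise and $f^{(\ell)}_n\to f^{(\ell)}$ in $L^2_s(\mu^{q_\ell})$ (standard truncation together with simple-function approximation gives this). The right-hand side of the moment formula for $f^{(\ell)}_n$ converges to the desired one by dominated convergence, since \eqref{eq:CumThmAss2} supplies the integrable majorant $|(\otimes_{\ell=1}^m f^{(\ell)})_\sigma|$ for each $\sigma\in\Pi_{\geq 2}(q_1,\ldots,q_m)$. For the left-hand side, apply the already-established elementary moment formula to $\widetilde{m}$ copies of each $|f^{(\ell)}_n|$ to obtain
\[
\BE\bigl[I_{q_\ell}(f^{(\ell)}_n)^{\widetilde{m}}\bigr]\leq \sum_{\sigma\in\Pi^{\widetilde{m}}_{\geq 2}(q_\ell)}\int_{\BX^{|\sigma|}}\bigl(\otimes^{\widetilde{m}}|f^{(\ell)}|\bigr)_\sigma\,\dint\mu^{|\sigma|},
\]
which is finite by \eqref{eq:CumThmAss1} (here evenness of $\widetilde{m}$ ensures the left-hand side is nonnegative and bounds the $\widetilde{m}$-th absolute moment). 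By H\"older with exponents $m/\widetilde{m}$, the products $\prod_{\ell=1}^m I_{q_\ell}(f^{(\ell)}_n)$ are bounded in $L^{\widetilde{m}/m}$ with $\widetilde{m}/m\geq 1$, hence uniformly integrable. Since $I_{q_\ell}(f^{(\ell)}_n)\to I_{q_\ell}(f^{(\ell)})$ in $L^2$ by the isometry, we can extract an a.s.\ convergent subsequence and pass to the limit in $L^1$, which transfers the moment identity.

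Phase (c) is the standard combinatorial step: write
\[
\cum(I_{q_1}(f^{(1)}),\ldots,I_{q_m}(f^{(m)}))=\sum_{\pi\in\mathcal{P}([m])}(-1)^{|\pi|-1}(|\pi|-1)!\prod_{B\in\pi}\BE\Big[\prod_{\ell\in B}I_{q_\ell}(f^{(\ell)})\Big],
\]
substitute the moment formula inside each block (the integrability requirements for a single block follow a fortiori from \eqref{eq:CumThmAss1}--\eqref{eq:CumThmAss2}), and regroup: the result is a sum indexed by $\sigma\in\Pi_{\geq 2}(q_1,\ldots,q_m)$ weighted by $\sum_{\pi\geq \sigma^*}(-1)^{|\pi|-1}(|\pi|-1)!$. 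The M\"obius identity on the lattice of set partitions of $[m]$ makes this weight vanish unless $\sigma^*$ is the one-block partition, leaving exactly $\widetilde{\Pi}_{\geq 2}(q_1,\ldots,q_m)$.

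The main obstacle is the uniform integrability in phase (b): the moment formula itself is what we are trying to establish, so we must already have the inequality for the $\widetilde{m}$-th absolute moment in hand before the approximation closes. This is exactly why the elementary phase (a) — where the formula is available for \emph{nonnegative} kernels by direct computation — has to be carried out first and applied to $|f^{(\ell)}|$ to produce the uniform bound that the dominated/uniformly-integrable passage to the limit demands. The role of the auxiliary even integer $\widetilde{m}\geq m$ is precisely to convert the sharper hypothesis \eqref{eq:CumThmAss1} into the moment control needed to handle $m$-fold products.
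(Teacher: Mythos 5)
Your overall architecture coincides with the paper's: establish the diagram formula for sufficiently nice kernels (the paper simply invokes \cite[Theorem 1]{LPST} for the truncated kernels, which plays the role of your phase (a)), pass to the limit using \eqref{eq:CumThmAss1} for moment control of the individual integrals and \eqref{eq:CumThmAss2} for dominated convergence of the diagram sums, and obtain the cumulant identity by moment--cumulant inversion. Two steps, however, contain genuine gaps.

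First, the uniform-integrability argument in phase (b) breaks down in the boundary case $\widetilde m=m$, which the theorem explicitly permits when $m$ is even. Your H\"older bound places the products $\prod_{\ell=1}^m I_{q_\ell}(f_n^{(\ell)})$ in $L^{\widetilde m/m}$, and for $\widetilde m=m$ this is only boundedness in $L^1$, which does not imply uniform integrability; the passage from a.s.\ convergence along a subsequence to convergence of expectations then fails. The paper's Lemma \ref{lem:ProdForm} proves the stronger fact $\BE I_{q_\ell}(f^{(\ell)}-f_n^{(\ell)})^{\widetilde m}\to 0$, i.e.\ convergence of each factor in $L^{\widetilde m}$ rather than mere boundedness, obtained by applying the diagram bound to $f^{(\ell)}-f_n^{(\ell)}$ and using dominated convergence; a telescoping decomposition of $\prod_\ell I_{q_\ell}(f^{(\ell)})-\prod_\ell I_{q_\ell}(f_n^{(\ell)})$ combined with H\"older then yields convergence of the expectations even when $\widetilde m=m$. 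Your phase (b) only produces the uniform bound, so you need to add this extra step (or else restrict to $\widetilde m>m$, which weakens the statement).

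Second, phase (c) is not \emph{a fortiori}. The Leonov--Shiryaev inversion requires the moment formula for every sub-product $\prod_{\ell\in B}I_{q_\ell}(f^{(\ell)})$ with $B\subsetneq[m]$, hence finiteness of $\int_{\BX^{|\sigma_B|}}|(\otimes_{\ell\in B}f^{(\ell)})_{\sigma_B}|\,\dint\mu^{|\sigma_B|}$ for all $\sigma_B\in\Pi_{\geq 2}((q_\ell)_{\ell\in B})$. Assumption \eqref{eq:CumThmAss2} only controls partitions of the full collection of $m$ rows, and a no-singleton partition of the rows indexed by $B$ need not extend to, nor be dominated by, one of all $m$ rows (the remaining rows may not be coverable by blocks of size at least two, and even when an extension exists the extended integral involves additional factors and does not control the sub-block integral). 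The paper sidesteps this by carrying out the inversion at the level of the truncated kernels, where every diagram integral is finite, and then letting $n\to\infty$ in the resulting cumulant identity: the joint moments of all sub-products converge by the same H\"older/telescoping argument, and the right-hand side converges by dominated convergence since $\widetilde\Pi_{\geq 2}(q_1,\hdots,q_m)\subset\Pi_{\geq 2}(q_1,\hdots,q_m)$. You should reroute phase (c) accordingly.
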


Formulas as those for the moments and cumulants in the previous theorem are known as product or diagram formulas in the literature. This line of research for the Poisson case goes back to the work \cite{Surgailis}. There as well as in \cite{LastPenroseBook,LPST,PeTa,DissertationSchulte} such formulas were derived under stronger integrability assumptions. For $m$ even and $f^{(1)}=\hdots=f^{(m)}$, \eqref{eq:CumThmAss1} and \eqref{eq:CumThmAss2} only require that the integrals appearing on the right-hand side of the moment formula exist as finite numbers, whence one can regard the integrability assumptions as minimal. The related but different problem of not only computing the expectation but the whole chaos expansion of a product of multiple Wiener-It\^o integrals is studied, for example, in \cite{DoeblerPeccatiECP18,Last,PeTa,Surgailis}.

Next, we present corresponding moment and a cumulant formulas for Poisson U-statistics. 

\begin{theorem}[{Moment and cumulant formulas} for Poisson U-statistics]\label{thm:CumulantFormulaUStat}
	Let $m\in\N$, let $\widetilde{m}\in\mathbb{N}$ with $\widetilde{m}\geq m$ be even and let $f^{(\ell)}\in L^1_s(\mu^{q_\ell})$ with $q_\ell\in\N$, $\ell\in\{1,\hdots,m\}$, be such that
	\begin{align}
		\int_{\BX^{|\sigma|}}|(\otimes_{\ell=1}^{\widetilde{m}}f^{(i)})_\sigma|\,\dint\mu^{|\sigma|} &<\infty,\qquad \sigma\in\overline{\Pi}^{\widetilde{m}}(q_i),\,i\in\{1,\ldots,m\},\label{eq:UStatThmAss1}\\
		\int_{\BX^{|\sigma|}} \big|(\otimes_{\ell=1}^m f^{(\ell)})_\sigma \big| \, \dint\mu^{|\sigma|} &<\infty,\qquad \sigma \in \widetilde{\Pi}(q_1,\hdots,q_m)\,.\label{eq:UStatThmAss2}
	\end{align}
	Then, for $S_\ell:=\sum_{(x_1,\ldots,x_{q_\ell})\in\eta^{q_\ell}_{\neq}} f^{(\ell)}(x_1,\ldots,x_{q_\ell})$, $\ell\in\{1,\ldots,m\}$,
	\begin{align*}
	\BE\Big[\prod_{\ell=1}^m(S_\ell-\BE S_\ell)\Big] &= \sum_{\sigma\in\overline{\Pi}(q_1,\ldots,q_m)}\int_{\BX^{|\sigma|}}(\otimes_{\ell=1}^mf^{(\ell)})_{\sigma}\,\dint\mu^{|\sigma|}\,,\\
	\cum(S_1,\hdots,S_m) &=\sum_{\sigma\in\widetilde{\Pi}(q_1,\hdots,q_m)} \int_{\BX^{|\sigma|}} (\otimes_{\ell=1}^m f^{(\ell)})_\sigma \, \dint\mu^{|\sigma|}\,.
	\end{align*}
\end{theorem}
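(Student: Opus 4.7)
My plan is to reduce both identities to Theorem~\ref{thm:CumulantFormulaMultipleIntegrals} via the Wiener-It\^o chaos expansion \eqref{eqn:ChaosExpansionUstatistic}, which gives
$$
S_\ell-\BE S_\ell=\sum_{i_\ell=1}^{q_\ell}I_{i_\ell}\bigl(f_{i_\ell}^{(\ell)}\bigr),
$$
with the symmetric contractions $f_{i_\ell}^{(\ell)}\in L^2_s(\mu^{i_\ell})$ defined in \eqref{eqn:kernels}. The first step will be to verify, using Fubini's theorem, the symmetry of each $f^{(\ell)}$, and H\"older-type estimates, that the hypotheses \eqref{eq:UStatThmAss1} and \eqref{eq:UStatThmAss2} imply the integrability conditions \eqref{eq:CumThmAss1} and \eqref{eq:CumThmAss2} of Theorem~\ref{thm:CumulantFormulaMultipleIntegrals} for every tuple $(f_{i_1}^{(1)},\ldots,f_{i_m}^{(m)})$ with $i_\ell\in\{1,\ldots,q_\ell\}$.

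Once this reduction is in place, I will expand the product (for the moment formula) or invoke multilinearity of joint cumulants together with the vanishing of higher cumulants of the constants $\BE S_\ell$ (for the cumulant formula), and apply Theorem~\ref{thm:CumulantFormulaMultipleIntegrals} to each resulting term. This will yield
$$
\BE\Big[\prod_{\ell=1}^m(S_\ell-\BE S_\ell)\Big]=\sum_{i_1=1}^{q_1}\cdots\sum_{i_m=1}^{q_m}\sum_{\sigma'\in\Pi_{\geq 2}(i_1,\ldots,i_m)}\int_{\BX^{|\sigma'|}}\bigl(\otimes_{\ell=1}^mf_{i_\ell}^{(\ell)}\bigr)_{\sigma'}\,\dint\mu^{|\sigma'|},
$$
and the analogous identity for $\cum(S_1,\ldots,S_m)$ with $\Pi_{\geq 2}$ replaced by $\widetilde{\Pi}_{\geq 2}$.

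The central combinatorial step is then to identify these right-hand sides as sums indexed by $\overline{\Pi}(q_1,\ldots,q_m)$ (respectively $\widetilde{\Pi}(q_1,\ldots,q_m)$). Given $\sigma\in\overline{\Pi}(q_1,\ldots,q_m)$, I will let $T_\ell\subseteq J_\ell$ denote the positions belonging to blocks of $\sigma$ of size at least two; by the defining property of $\overline{\Pi}$ these sets are non-empty. The restriction of $\sigma$ to $T_1\cup\cdots\cup T_m$ lies in $\Pi_{\geq 2}(|T_1|,\ldots,|T_m|)$, while the elements of $J_\ell\setminus T_\ell$ form singleton blocks, and the assignment $\sigma\mapsto(T_1,\ldots,T_m,\sigma')$ is a bijection. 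Because $f^{(\ell)}$ is symmetric, the integral of $(\otimes_\ell f^{(\ell)})_\sigma$ depends only on the cardinalities $i_\ell:=|T_\ell|$ and on $\sigma'$; integrating out the $q_\ell-i_\ell$ singleton coordinates in row $\ell$ produces exactly $\binom{q_\ell}{i_\ell}^{-1}f_{i_\ell}^{(\ell)}$, so that summing over the $\binom{q_\ell}{i_\ell}$ choices of $T_\ell$ cancels the binomial factor and reproduces the intermediate identity. The cumulant identity follows by the same argument, upon observing that singleton blocks touch only one row and therefore contribute nothing to $\sigma^*$, so the connectedness condition $|\sigma^*|=1$ characterising $\widetilde{\Pi}(q_1,\ldots,q_m)$ is equivalent to $\sigma'\in\widetilde{\Pi}_{\geq 2}(i_1,\ldots,i_m)$.

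The main obstacle I anticipate lies in the first (integrability) step rather than in the combinatorial reassembly. The hypotheses \eqref{eq:UStatThmAss1}-\eqref{eq:UStatThmAss2} are stated only for the original kernels $f^{(\ell)}$, and transferring them to all contractions $f_{i_\ell}^{(\ell)}$ without additional assumptions is what distinguishes this refinement from earlier product formulas. The purpose of the auxiliary even integer $\widetilde{m}\geq m$ in \eqref{eq:UStatThmAss1} is precisely to pair each factor with a ``conjugate'' copy so that a Cauchy-Schwarz/H\"older-type bound controls the iterated integrals needed to check \eqref{eq:CumThmAss1}-\eqref{eq:CumThmAss2} for the contractions. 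Once this technical step is secured, the remainder of the argument is essentially bookkeeping based on the bijection described above.
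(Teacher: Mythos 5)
Your proposal is correct and follows essentially the same route as the paper: chaos expansion \eqref{eqn:ChaosExpansionUstatistic}, transfer of the integrability hypotheses to the contracted kernels $f_{i_\ell}^{(\ell)}$, application of Theorem~\ref{thm:CumulantFormulaMultipleIntegrals}, and then exactly the singleton/non-singleton bijection (which the paper formalises via an explicit map $\Theta_{j_1,\ldots,j_m}^{q_1,\ldots,q_m}$ augmenting a partition by singleton blocks) with the binomial factors cancelling by symmetry of the $f^{(\ell)}$. The one step you flag as the main obstacle is in fact more elementary than you anticipate: no Cauchy--Schwarz or H\"older pairing is needed to verify \eqref{eq:CumThmAss1}--\eqref{eq:CumThmAss2} for the contractions, since the pointwise bound $|f_{j}^{(i)}|\leq\binom{q_i}{j}\int|f^{(i)}|\,\dint\mu^{q_i-j}$ together with Tonelli turns each integral over a partition $\pi$ of the contracted kernels into the corresponding integral over the singleton-augmented partition $\Theta(\pi)\in\overline{\Pi}$ of the original kernels, which is finite by \eqref{eq:UStatThmAss1} and \eqref{eq:UStatThmAss2}; the even $\widetilde m$ is required only because it appears in the hypothesis of Theorem~\ref{thm:CumulantFormulaMultipleIntegrals} itself.
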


{Note that the difference between the formulas for the joint {moments and} cumulants of multiple Wiener-It\^o integrals in Theorem \ref{thm:MDP} and those in Theorem \ref{thm:CumulantFormulaUStat} for Poisson U-statistics is the appearance of {different sets} of partitions one has to sum over. We remark that the formulas in Theorem \ref{thm:CumulantFormulaUStat} {generalise} \cite[Proposition 12.13]{LastPenroseBook}, \cite[Corollary 1]{LPST} and \cite[Corollary 3.5]{DissertationSchulte}.}

\section{Applications}\label{sec:Applications}

\subsection{U-statistics with fixed kernel and $k$-geodesic processes}\label{subsec:ApplUstatistics}

We start our collection of applications by considering Poisson U-statistics whose kernel function does not depend on the intensity parameter of the underlying Poisson process.

\begin{corollary}\label{cor:ExFixedKernel}
Let $(\BX,\cX,\mu)$ be a probability space, let $(t_n)_{n\in\N}$ be a sequence of real numbers satisfying $t_n\geq 1$ and $t_n\to\infty$, as $n\to\infty$, and let $f: \BX^q\to \R$ for some $q\in\N$ be measurable and such that $\|f\|_\infty:=\sup\{|f(x)|: x \in\mathbb{X}^q\}<\infty$ and
\begin{align}\label{eq:FixedKernelConstantVf}
v_f:=q^2 \int_{\BX} \bigg(\int_{\BX^{q-1}} f(x,x_1,\hdots,x_{q-1}) \, \mu^{q-1}(\dint(x_1,\hdots,x_{q-1})) \bigg)^2 \, \mu(\dint x)>0\,.
\end{align}
For each $n\in\N$, let $\eta_n$ be a Poisson process on $\BX$ with intensity measure $t_n\mu$. Then the sequence $(S_n)_{n\in\N}$ given by
$$
S_n:=\sum_{(x_1,\hdots,x_q)\in \eta_{n,\neq}^q} f(x_1,\hdots,x_q)
$$
satisfies $\boldsymbol{{\rm MDP}}(q-1, (\tau_n)_{n\in\N})$, $\boldsymbol{{\rm CI}}(q-1, (\tau_n)_{n\in\N})$ and $\boldsymbol{{\rm NACC}}(q-1, (\tau_n)_{n\in\N})$ with $\tau_n:=\sqrt{t_n}/ (q^{3q}\max\{(\|f\|_\infty/\sqrt{v_f})^3,\|f\|_\infty/\sqrt{v_f}\})$. 
\end{corollary}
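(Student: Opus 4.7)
The plan is to deduce the corollary directly from Theorem \ref{thm:MDP2} applied with $k=1$, $q_1=q$, intensity measure $\mu_n:=t_n\mu$, and the $n$-independent kernel $f_n^{(1)}:=f$, so that $Z_n=S_n$. The integrability condition \eqref{eqn:ConditionKernelUstatistic} holds trivially since $\|f\|_\infty<\infty$ and $\mu$ is a probability measure. It then suffices to identify an admissible sequence $(\beta_n)_{n\in\N}$ such that \eqref{eq:ConditionUstatistics} is satisfied and read off $\tau_n=c_{q,1}\beta_n^{-1}$ with $c_{q,1}=1/q^{3q}$.

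First I would lower-bound the variance. By \eqref{eqn:kernels} applied with intensity $t_n\mu$, the first-order kernel reads $f_1^{(n)}(y)=q\,t_n^{q-1}\int_{\BX^{q-1}}f(y,x_1,\ldots,x_{q-1})\,\mu^{q-1}(\dint(x_1,\ldots,x_{q-1}))$, whence $\|f_1^{(n)}\|_{L^2(t_n\mu)}^2=t_n^{2q-1}v_f$; by \eqref{eqn:VarianceUstatistic} this yields $\BV S_n\geq t_n^{2q-1}v_f$. Next, for any $m\geq 3$ and any $\sigma\in\widetilde{\Pi}^m(q)$, since $\mu$ is a probability measure and $|f|\leq\|f\|_\infty$,
$$
\Big|\int_{\BX^{|\sigma|}}(\otimes_{\ell=1}^m f)_\sigma\,\dint(t_n\mu)^{|\sigma|}\Big|\leq t_n^{|\sigma|}\,\|f\|_\infty^m.
$$

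The combinatorial core is the bound $|\sigma|\leq mq-m+1$ valid for every $\sigma\in\widetilde{\Pi}^m(q)$. To see this, view each block $B\in\sigma$ with $|B|\geq 2$ as a hyperedge on $\{1,\ldots,m\}$ joining the rows it meets; since $|\sigma^*|=1$ this hypergraph is connected, so $\sum_{|B|\geq 2}(|B|-1)\geq m-1$. Denoting by $s$ the total number of elements lying in blocks of size $\geq 2$ and by $k$ the number of such blocks, this reads $s-k\geq m-1$, so $|\sigma|=(mq-s)+k\leq mq-m+1$. Combining the three estimates and using $t_n\geq 1$,
$$
(\BV S_n)^{-m/2}\Big|\int_{\BX^{|\sigma|}}(\otimes_{\ell=1}^m f)_\sigma\,\dint(t_n\mu)^{|\sigma|}\Big|\leq t_n^{mq-m+1-(2q-1)m/2}\frac{\|f\|_\infty^m}{v_f^{m/2}}=\frac{a^m}{t_n^{(m-2)/2}},
$$
with $a:=\|f\|_\infty/\sqrt{v_f}$, since the exponent of $t_n$ collapses to $-(m-2)/2$.

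Setting $\beta_n:=\max(a^3,a)/\sqrt{t_n}$, a short case distinction shows that $a^m\leq\max(a^3,a)^{m-2}$ for every $m\geq 3$ (if $a\geq 1$, then $m/(m-2)\leq 3$, so $a^{m/(m-2)}\leq a^3$; if $a\leq 1$, then $a^{m/(m-2)}\leq a$), which is precisely the inequality needed to verify \eqref{eq:ConditionUstatistics}. Theorem \ref{thm:MDP2} then yields $\boldsymbol{{\rm MDP}}$, $\boldsymbol{{\rm CI}}$ and $\boldsymbol{{\rm NACC}}$ with parameter $c_{q,1}\beta_n^{-1}=\sqrt{t_n}/(q^{3q}\max(a^3,a))=\tau_n$. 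I expect the main subtlety to be the combinatorial bound $|\sigma|\leq mq-m+1$ paired with the careful exponent bookkeeping: it is precisely the collapse of $mq-m+1-(2q-1)m/2$ to $-(m-2)/2$ that forces $\tau_n$ to scale as $\sqrt{t_n}$, and this is essentially the sharpest scale achievable in view of Theorem \ref{thm:CharlierPolynomials}.
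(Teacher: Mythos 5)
Your proposal is correct and follows essentially the same route as the paper's proof: the lower variance bound $\BV S_n\geq v_f t_n^{2q-1}$ via the first chaos, the trivial bound $t_n^{|\sigma|}\|f\|_\infty^m$ on the partition integrals, the combinatorial estimate $|\sigma|\leq (q-1)m+1$ (which the paper states without proof and you justify via connectivity of the block hypergraph), and the same collapse of the $t_n$-exponent to $-(m-2)/2$ followed by the $\max\{a^3,a\}^{m-2}$ absorption before invoking Theorem \ref{thm:MDP2}.
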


The previous corollary can be applied, for example, to Poisson hyperplane or Poisson $k$-flat processes in $\R^d$, one of the principal models considered in stochastic geometry, cf.\ \cite{SW08}. Following \cite{BetkenHugThaele,HeroldHugThaele,HugThaeleSTIT,KabluchkoThaeleFaces} we treat this model in greater generality and denote for $d\geq 2$ and $\kappa\in\{-1,0,1\}$ by $\MM_\kappa^d$ the $d$-dimensional standard space of constant curvature $\kappa$. As a model for $\MM_1^d$ we can take the $d$-dimensional unit sphere $\mathbb{S}^d\subset\R^{d+1}$, for $\MM_0^d$ the $d$-dimensional Euclidean space $\R^d$ and for $\MM_{-1}^d$ the Beltrami-Klein model in the interior $B^d:=\{x=(x_1,\ldots,x_d)\in\R^d:x_1^2+\ldots+x_d^2<1\}$ of the $d$-dimensional unit ball, see \cite[Chapter 6]{Ratcliffe2019}. For $k\in\{1,\ldots,d-1\}$ a $k$-geodesic of $\MM_\kappa^d$ is a totally geodesic $k$-dimensional submanifold of $\MM^d_\kappa$ and by $A_\kappa(d,k)$ we indicate the space of $k$-geodesics in $\MM_\kappa^d$. In the language of our model spaces, the elements of $A_1(d,k)$ arise as intersections of $\mathbb{S}^d$ with $(k+1)$-dimensional linear subspaces of $\R^{d+1}$, $A_0(d,k)$ is the space of $k$-dimensional affine subspaces of $\R^d$ and each $k$-geodesic in the Beltrami-Klein model for $\MM_{-1}^d$ is the non-empty intersection of $B^d$ with an element from $A_0(d,k)$. The spaces $A_\kappa(d,k)$ carry natural measures $\mu_{k,\kappa}^d$ which are invariant under the action of the full isometry group of $\MM_\kappa^d$ and are unique up to a multiplicative factor. Since $\MM_1^d$ is a compact space, we can take for $\mu_{k,1}^d$ the invariant probability measure, whereas for $\mu_{k,0}^d$ we choose the same normalization as in \cite{SW08} and for $\mu_{k,-1}^d$ the normalization as in \cite{HeroldHugThaele}.

Next, we fix a Borel set $W\subset\MM_\kappa^d$ such that $c_{k,\kappa,W}:=\mu_{k,\kappa}(\{E\in A_\kappa(d,k):E\cap W\neq\emptyset\})\in(0,\infty)$ and denote by $\mu_{k,\kappa,W}^d$ the restriction of $c_{k,\kappa,W}^{-1}\mu_{k,\kappa}^d$ to $\{E\in A_\kappa(d,k):E\cap W\neq\emptyset\}$. Now, let $(t_n)_{n\in\N}$ be a sequence such that $t_n\geq 1$ for any $n\in\N$ and $t_n\to\infty$, as $n\to\infty$, and for each $n\in\N$, let $\eta_n=\eta_{k,\kappa,W,n}$ be a Poisson process on $A_\kappa(d,k)$ with intensity measure $t_n\mu_{k,\kappa,W}^d$. In the classical Euclidean {case} $\kappa=0$, $\eta_n$ is called a Poisson $k$-flat process and we refer to \cite{SW08} for further details and background material concerning such processes and their most fundamental properties. Spherical and hyperbolic processes of $k$-geodesics were only recently studied in detail in \cite{HeroldHugThaele,HugThaeleSTIT,KabluchkoThaeleFaces} for $k=d-1$ and in \cite{BetkenHugThaele} for general $k$, respectively.

Assume now that $2k\geq d$ and let $q\in\N$ be such that $d-q(d-k)\geq 0$. The intersection process of order $q$ induced by $\eta_n$ arises by considering the intersections of any $q$ pairwise different $k$-flats from $\eta_n$. This intersection is almost surely either empty or an element of $A_\kappa(d,d-q(d-k))$, see \cite[Lemma 2.2]{BetkenHugThaele}. Now, {for} each $n\in\N$ define the Poisson U-statistic
$$
S_n := {1\over q!}\sum_{(E_1,\ldots,E_q)\in\eta_{n,\neq}^q}\mathcal{H}_\kappa^{d-q(d-k)}(E_1\cap\ldots\cap E_q\cap W)
$$
of order $q$ with fixed kernel $\mathcal{H}_\kappa^{d-q(d-k)}(\,\cdot\,\cap W)/q!$, where $\mathcal{H}_\kappa^{d-q(d-k)}$ denotes the $(d-q(d-k))$-dimensional Hausdorff measure with respect to the Riemannian metric in $\MM_\kappa^d$. In other words, $S_n$ is the total $(d-q(d-k))$-volume of the trace of the intersection process of order $q$ induced by $\eta_n$ within $W$. 

The U-statistics {$(S_n)_{n\in\N}$} satisfy the assumptions of Corollary \ref{cor:ExFixedKernel} with explicitly known constants in \eqref{eq:FixedKernelConstantVf}, see \cite[Section 6]{LPST} for the Euclidean case $\kappa=0$ and \cite{BetkenHugThaele} for general $\kappa$ (in fact, the constant $v_f$ is implicit in \cite[Proposition 3.1]{BetkenHugThaele} and $\|f\|_\infty=\max\{\mathcal{H}^{d-q(d-k)}(F\cap W):F\in A_\kappa(d,d-q(d-k))\}<\infty$). Thus, Corollary \ref{cor:ExFixedKernel} yields moderate deviation principles, concentration inequalities as well as a normal approximation bound with Cram\'er correction  in this situation.

\subsection{Subgraph counts in random geometric graphs}\label{subsec:ApplRandomGeometricGraphs}

Let $(t_n)_{n\in\N}$ be a positive real-valued sequence such that $t_n\to\infty$, as $n\to\infty$, and let $(r_n)_{n\in\N}$ be a positive real-valued sequence. Further, we fix a compact convex set $W\subset\R^d$ with interior points and let, for each $n\in\N$, $\eta_n$ be a Poisson process whose intensity measure $\mu_n$ is $t_n$ times the {restriction of the Lebesgue measure to} $W$. Based on this data, we construct the random geometric graph ${\rm RGG}(\eta_n,r_n)$ by taking the points of $\eta_n$ as the vertices of the graph and by connecting two distinct points by an edge whenever their (Euclidean) distance is strictly positive and does not exceed the given threshold $r_n$.

We are interested in the subgraph counting statistics associated with the random geometric graph. To define them, let $G$ be a fixed connected graph with $q$ vertices, and for $x_1,\ldots,x_q\in W$ we let $f_{=}(x_1,\ldots,x_q; G,r_n)$ be $1/q!$ times the indicator that the random geometric graph ${\rm RGG}(\{x_1,\ldots,x_q\},r_n)$ is isomorphic to $G$, while $f_{\subset}(x_1,\ldots,x_q; G,r_n)$ is $1/q!$ times the number of subgraphs of ${\rm RGG}(\{x_1,\ldots,x_q\},r_n)$ that are isomorphic to $G$. The subgraph counting statistics are given by
$$
S^{\diamondsuit}_n(G) := \sum_{(x_1,\ldots,x_q)\in\eta_{n,\neq}^q}f_{\diamondsuit}(x_1,\ldots,x_q;G,r_n)\,, \quad \diamondsuit\in\{=,\subset\}.
$$
Here, $S^{=}_n(G)$ and $S^{\subset}_n(G)$ are the numbers of induced and non-induced copies of $G$ in ${\rm RGG}(\eta_n,r_n)$. For example, if $G$ is the graph with three vertices and two edges, three vertices connected by three edges in ${\rm RGG}(\eta_n,r_n)$ are counted thrice in $S^{\subset}_n(G)$ but do not to contribute to $S^{=}_n(G)$. From Theorem \ref{thm:MDP2} we can deduce the following result.

\begin{corollary}\label{cor:ExRGG}
For $k\in\N$, $a_1,\hdots,a_k\in\R\setminus\{0\}$, $\diamondsuit_1,\hdots,\diamondsuit_k\in\{=,\subset\}$ and connected graphs $G_1,\hdots,G_k$ such that $G_i$ has $q_i$ vertices for $i\in\{1,\ldots,k\}$, let $S_n:=\sum_{i=1}^k a_i S_n^{\diamondsuit_i}(G_i)$, and define $p:=\min\{q_1,\hdots,q_k\}$, $q:=\max\{q_1,\hdots,q_k\}$ and $a:=\max\{|a_1|,\hdots,|a_k|\}$.
Assume that $t_n\to\infty$, as $n\to\infty$, and that there is a constant $v>0$ such that
\begin{equation}\label{eqn:vRGG}
\BV S_n \geq  v \max\{t_n^{2q-1}(\kappa_dr_n^d)^{2q-2},t_n^{p}(\kappa_dr_n^d)^{p-1}\}\,, \qquad n\in\N\,.
\end{equation}
Then the sequence $(S_n)_{n\in\N}$ satisfies $\boldsymbol{{\rm MDP}}(q-1,(\tau_n)_{n\in\N})$, $\boldsymbol{{\rm CI}}(q-1,(\tau_n)_{n\in\N})$ and $\boldsymbol{{\rm NACC}}(q-1,(\tau_n)_{n\in\N})$ with $\tau_n$ given by
$$
\tau_n:= \frac{\sqrt{v t_n \min\{1,t_n\kappa_dr_n^d\}^{p-1}}}{(kq^q)^3\max\{a,a^3\Vol(W)/v\}}\,,\qquad n\in\N\,.
$$
\end{corollary}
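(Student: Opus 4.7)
The strategy is to invoke Theorem \ref{thm:MDP2} with the Poisson processes $(\eta_n)_{n\in\N}$, intensity measures $\mu_n$ equal to $t_n$ times the restriction of Lebesgue measure to $W$, and kernels $f_n^{(i)}(x_1,\hdots,x_{q_i}) := a_i f_{\diamondsuit_i}(x_1,\hdots,x_{q_i};G_i,r_n)$. Each $f_{\diamondsuit_i}(\,\cdot\,;G_i,r_n)$ is bounded by $1$ and compactly supported in $W^{q_i}$, so the square-integrability condition \eqref{eqn:ConditionKernelUstatistic} holds automatically, and $\sum_i\|f_n^{(i)}\|_{L^2(\mu_n^{q_i})}^2>0$ follows from \eqref{eqn:vRGG}. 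It therefore suffices to verify the key estimate \eqref{eq:ConditionUstatistics} with an appropriate $\beta_n$, at which point Theorem \ref{thm:MDP2} delivers the stated $\boldsymbol{{\rm MDP}}$, $\boldsymbol{{\rm CI}}$ and $\boldsymbol{{\rm NACC}}$ with $\tau_n = c_{q,k}/\beta_n$.

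The geometric heart of the proof is a sharp bound on
\[
I_\sigma := \int_{\BX_n^{|\sigma|}} (\otimes_{\ell=1}^m f_n^{(i_\ell)})_\sigma \,\dint\mu_n^{|\sigma|}\qquad\text{for } \sigma\in\widetilde{\Pi}(q_{i_1},\hdots,q_{i_m}).
\]
Since each graph $G_{i_\ell}$ is connected, any $q_{i_\ell}$-tuple in the support of $f_n^{(i_\ell)}$ is an $r_n$-connected cluster of diameter at most $(q-1)r_n$. The condition $|\sigma^*|=1$ links all $m$ such clusters through shared variables into a single connected hypergraph on the $|\sigma|$ variables. A breadth-first exploration starting from any one variable then places each subsequent variable within $(q-1)r_n$ of a variable already placed, so the Lebesgue volume of the support on $W^{|\sigma|}$ is bounded by $\Vol(W)\cdot(c_q r_n^d)^{|\sigma|-1}$ for a constant $c_q$ depending only on $q$ and $d$. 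Combined with $\|f_n^{(i)}\|_\infty\leq a$ this yields
\[
|I_\sigma|\leq a^m\,\Vol(W)\,t_n\,\bigl(c_q t_n \kappa_d r_n^d\bigr)^{|\sigma|-1}.
\]

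It remains to match this against \eqref{eqn:vRGG}. Setting $s_n := t_n\kappa_dr_n^d$, the variance bound reads $\BV S_n\geq v t_n s_n^{2q-2}$ in the dense regime $s_n\geq 1$, and $\BV S_n\geq v t_n s_n^{p-1}$ in the sparse regime $s_n\leq 1$. Using $p\leq|\sigma|\leq(q-1)m+1$ (the upper bound since $\sigma\in\widetilde{\Pi}$ requires at least $m-1$ cross-factor merges, the lower bound since each factor contributes $q_{i_\ell}\geq p$ elements lying in distinct blocks), a short case analysis in each of the two regimes produces a uniform quotient bound
\[
\frac{|I_\sigma|}{(\BV S_n)^{m/2}}\leq \beta_n^{m-2}\qquad\text{with}\qquad \beta_n=\frac{C_*}{\sqrt{v t_n \min\{1,s_n\}^{p-1}}},
\]
where $C_*$ is of order $\max\{a,a^3\Vol(W)/v\}$ times combinatorial constants that are absorbed into the prefactor $(kq^q)^3 = c_{q,k}^{-1}$. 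The principal obstacle is precisely this consolidation of the two regimes of \eqref{eqn:vRGG} into a single expression valid uniformly over all $m\geq 3$ and all $\sigma\in\widetilde{\Pi}$; this is exactly what motivates the factor $\min\{1,t_n\kappa_dr_n^d\}^{p-1}$ in the definition of $\tau_n$. Applying Theorem \ref{thm:MDP2} with this $\beta_n$ then concludes the proof.
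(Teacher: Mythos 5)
Your proposal follows the same route as the paper: reduce to Theorem \ref{thm:MDP2} with kernels $a_i f_{\diamondsuit_i}(\,\cdot\,;G_i,r_n)$, bound the partition integrals by exploiting that $|\sigma^*|=1$ forces the $|\sigma|$ merged variables into a single connected geometric cluster, and then split into the regimes $t_n\kappa_dr_n^d\geq 1$ and $t_n\kappa_dr_n^d<1$ against the two branches of \eqref{eqn:vRGG}, using $p\leq|\sigma|\leq(q-1)m+1$. The regime consolidation, the resulting $\beta_n$, and the identification $a\max\{1,a^2\Vol(W)/v\}=\max\{a,a^3\Vol(W)/v\}$ all match the paper.

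There is one quantitative slip. Your volume-of-support estimate attaches each new variable within distance $(q-1)r_n$ of an already placed one (the cluster \emph{diameter}), giving $(c_qr_n^d)^{|\sigma|-1}$ with $c_q=(q-1)^d\kappa_d$, and you then claim the surplus is ``absorbed into $(kq^q)^3$''. That prefactor is not available for absorption: it equals $c_{q,k}^{-1}$ from Theorem \ref{thm:MDP2} and is already fully spent on the partition count $|\widetilde{\Pi}(q_{i_1},\hdots,q_{i_m})|\leq q^{qm}(m!)^q$. In the dense regime the factors $(\kappa_dr_n^d)^{m(q-1)}$ must cancel exactly between $|I_\sigma|$ and $(\BV S_n)^{m/2}$; with your constant an extra $(q-1)^{dm(q-1)}$ survives and forces a $d$- and $q$-dependent deterioration of $\tau_n$, so the corollary as stated (with its explicit $\tau_n$) is not reached. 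The fix is the paper's sharper bound: dominate each $f_{\diamondsuit}$ by the permutation average of indicators that all edges of $G_{i_\ell}$ are present, as in \eqref{eqn:BoundSubgraph}; for each fixed choice of permutations the required adjacencies form a connected graph on the $|\sigma|$ variables whose every edge has length at most $r_n$, so integrating successively along a spanning tree yields exactly $t_n^{|\sigma|}\Vol(W)(\kappa_dr_n^d)^{|\sigma|-1}$, the $1/q_{i_\ell}!$ factors cancelling the permutation counts. With that replacement your argument closes and coincides with the paper's proof.
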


Our concentration inequality for the subgraph counting statistic is not the first one in this direction that appeared in the literature. More precisely, for a fixed connected graph $G$ with $q\geq 2$ vertices, {it was shown in \cite[Theorem 1.1]{BachmannReitzner}} that, essentially, the subgraph counting statistic $S_n^\subset(G)$ satisfies a concentration inequality of the form
$$
\BP(|S_n^\subset(G)-\BE S^\subset_n(G)|\geq z)\leq 2\exp(-c_{d,q}z^{1/q})\,,\qquad z\geq 0\,,
$$
where $c_{d,q}\in(0,\infty)$ is a constant that only depends on the space dimension $d$ as well as on the vertex number $q$ of $G$. We emphasise that our Corollary \ref{cor:ExRGG} yields a result of a similar nature and especially shows the same exponent $1/q$ for $z$ (for the special case of the edge counting statistic corresponding to the choice $q=2$ we refer to \cite{BachmannPeccati,ReitznerSchulteThaele} and the discussions therein). On the other hand, our framework allows to deal simultaneously with a finite number of graphs and also leads to moderate deviation principles and normal approximation bounds with Cram\'er correction, which have no counterparts in the existing literature.

\begin{remark}\rm 
The assumption on the existence of a strictly positive constant $v>0$ in the lower variance bound \eqref{eqn:vRGG} for $S_n$ is always satisfied if we choose $k=1$ and $\diamondsuit_1=\text{`$\subset$'}$. For $\diamondsuit_1=\text{`$=$'}$ we additionally need to assume that $G_1$ is feasible in the sense that $\BP(\operatorname{RGG}(\{X_1,\hdots,X_{q_1}\},r)\text{ is isomorphic to }G_1)>0$ for some $r>0$ and independent random points $X_1,\hdots,X_{q_1}$ uniformly distributed on $W$, cf.\ \cite[Chapter 3]{PenroseBook} for a discussion of the latter concept. This follows, for example, from the results in \cite[Chapter 3.3]{PenroseBook} and especially from the remark after Proposition 3.7 there. On the other hand, if $k\geq 2$ the assumption seems unavoidable and does not need to be satisfied in general. For example, if $k=2$ and if we take $G_1=G_2=G$ for some connected graph $G$ (on $q\geq 2$ vertices) as well as $a_1=-a_2$, then $S_n$ is identically zero with probability one and so \eqref{eqn:vRGG} is satisfied only with $v=0$.
\end{remark}

\begin{remark}\rm 
A generalisation of random geometric graphs is the random connection model, where similarly to the Erd\H{o}s-R\'enyi random graph it is decided independently for each pair of points of the underlying Poisson process if they are connected by an edge, but the probability of an edge depends on the relative spatial position of the vertices. In the recent preprint \cite{LiuPrivault}, for subgraph counts of random connection models cumulant bounds are derived in order to establish rates of convergence for the normal approximation in Kolmogorov distance. Although the random geometric graph is a special case of the random connection model, the cumulant estimates required for our Corollary \ref{cor:ExRGG} do not follow directly from the findings of \cite{LiuPrivault}, since this paper works with rescalings of a fixed connection function, which does not cover the situation of Corollary \ref{cor:ExRGG} with the two parameters $t$ and $r_t$. However, the moment and cumulant formulas for subgraph counts in \cite{LiuPrivault} are very similar to our general formulas for Poisson U-statistics, since subgraph counts can be seen as U-statistics of a Poisson process with an additional randomisation coming from drawing edges randomly. In particular, the same classes of partitions are used. The lower bound in our Proposition \ref{Prop:tildePi} implies that the exponent $r$ in Lemma 2.6 of \cite{LiuPrivault} is in fact optimal.
\end{remark}

\subsection{Quadratic functionals of Ornstein-Uhlenbeck L\'evy processes}\label{subsec:ApplOrnsteinUhlenbeck}

Let $\eta$ be a Poisson process on $\R\times\R$ with intensity measure $\mu:=\lambda\otimes \nu$, where $\lambda$ is the Lebesgue measure on $\R$ and $\nu$ is a $\sigma$-finite measure on $\R$ with $\int_\R u^2 \, \nu(\dint u) =1$. We denote by $\hat{\eta}$ the compensated Poisson process $\eta-\mu$. For a fixed parameter $\varrho>0$ the Ornstein-Uhlenbeck L\'evy process $(U_t)_{t\in\R}$ is given as the stochastic integral
$$
U_t:=\sqrt{2\varrho} \int_{(-\infty,t]\times \R} u e^{-\varrho (t-x)} \, \hat{\eta}(\dint (x,u))
$$
with respect to $\hat{\eta}$. We are interested in the behaviour of the quadratic functionals
$$
Q(T):= \int_0^T U_t^2 \, \dint t, \quad T\geq 0\,.
$$
A central limit theorem for $Q(T)$ with a rate of convergence has been obtained in \cite{PeccatiSoleTaqquUtzet} in terms of the Wasserstein distance and in terms of the Kolmogorov distance in \cite{EichelsbacherThaele}. Our next result adds a moderate deviation principle, a concentration inequality as well as a normal approximation bound with Cram\'er correction. To formulate it, define the constant $c_\nu:=\int_{-\infty}^\infty u^4 \, \nu(\dint u)$.

\begin{corollary}\label{cor:ExOrnsteinUhlenbeck}
Let $(T_n)_{n\in\N}$ be a positive real-valued sequence such that $T_n>1/\varrho$ and $T_n\to\infty$, as $n\to\infty$, and assume that there exists a constant $M\geq 1$ such that $\int_{-\infty}^\infty |u|^m \, \nu(\dint u)\leq M^m$ for all $m\in\N$. Then the sequence $(Q(T_n))_{n\in\N}$ satisfies $\boldsymbol{{\rm MDP}}(1,(\tau_n)_{n\in\N})$, $\boldsymbol{{\rm CI}}(1,(\tau_n)_{n\in\N})$ and $\boldsymbol{{\rm NACC}}(1,(\tau_n)_{n\in\N})$ with $(\tau_n)_{n\in\N}$ given by
$$
\tau_n:= \frac{1}{512}\min\bigg\{1,  \frac{c_\nu\big(T_n-1/\varrho \big)}{(4M^4)^2\max\{1,2/\varrho\}^2 (T_n+1/(2\varrho))}\bigg\} \frac{\sqrt{c_\nu\big(T_n-1/\varrho \big)}}{4M^4\max\{1,2/\varrho\}}\,.
$$
\end{corollary}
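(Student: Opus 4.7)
\textbf{Proof plan for Corollary \ref{cor:ExOrnsteinUhlenbeck}.}

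The overall strategy is to realize $Q(T_n)$ as a finite sum of Wiener-It\^o integrals of orders $1$ and $2$ and then apply Theorem \ref{thm:MDP} with $k=2$, $q=2$ (which produces the parameter $\gamma=q-1=1$). Set $g_t(x,u):=u\,e^{-\varrho(t-x)}\mathbf{1}(x\leq t)$, so that $U_t=\sqrt{2\varrho}\,I_1(g_t)$. The first step is to apply the product formula $I_1(g_t)^2=I_2(g_t\otimes g_t)+I_1(g_t^2)+\|g_t\|_{L^2(\mu)}^2$, then integrate in $t\in[0,T_n]$ and exchange the order of integration. This yields the representation
\[
Q(T_n)-\BE\,Q(T_n) \;=\; 2\varrho\,\bigl[I_1(f^{(1)}_{T_n}) + I_2(f^{(2)}_{T_n})\bigr],
\]
with kernels
\[
f^{(1)}_T(x,u)=u^2\!\!\int_{\max(x,0)}^{T}\!\!\!e^{-2\varrho(t-x)}\,\dint t,\qquad
f^{(2)}_T((x_1,u_1),(x_2,u_2))=u_1u_2\!\!\int_{\max(x_1,x_2,0)}^{T}\!\!\!\!e^{-\varrho(2t-x_1-x_2)}\,\dint t.
\]

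Second, I would compute and lower-bound the variance. A direct computation using Fubini and $\int u^2\,\nu(\dint u)=1$ gives $\|f^{(1)}_T\|_{L^2(\mu)}^2$ and $\|f^{(2)}_T\|_{L^2(\mu^2)}^2$ in closed form; both grow linearly in $T$ as $T\to\infty$, and a clean lower bound of the order $c_\nu(T-1/\varrho)$ on $\BV Q(T)$ can be extracted, which matches the denominator appearing in the claimed $\tau_n$.

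The main technical step, and the expected bottleneck, is verifying the cumulant-type bound \eqref{eq:ConditionIntegrals} of Theorem \ref{thm:MDP}: for every $m\geq 3$, all $i_1,\dots,i_m\in\{1,2\}$, and every $\sigma\in\widetilde\Pi_{\geq 2}(q_{i_1},\dots,q_{i_m})$, one must bound
\[
\Bigl|\int_{(\R\times\R)^{|\sigma|}}\!\bigl(\otimes_{\ell=1}^m f^{(i_\ell)}_{T_n}\bigr)_{\!\sigma}\,\dint(\lambda\otimes\nu)^{|\sigma|}\Bigr|
\;\leq\;C^m\,T_n
\]
for a constant $C$ polynomial in $M,\varrho^{-1}$. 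The strategy is to write each kernel as an integral over its own time variable $t_\ell\in[0,T_n]$, so that after the $\sigma$-contraction the integrand is a product of exponential factors of the form $e^{-\varrho|t_\ell-x|}$ multiplied by monomials in the $u$-variables whose $\nu$-integrals are controlled by the moment bound $\int|u|^p\dint\nu\leq M^p$. The $|\sigma|$ integrations in $x$ can be performed explicitly; using the connectedness of $\sigma^*$ (which forces every time $t_\ell$ to be linked via common $x$-variables to some other $t_{\ell'}$) and the exponential decay, $m-1$ of the $t_\ell$-integrals can be extended to $\R$ and bounded by constants, leaving one integration over $[0,T_n]$ that produces the linear-in-$T_n$ factor. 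The combinatorial bookkeeping over $\widetilde\Pi_{\geq 2}(q_{i_1},\dots,q_{i_m})$ with $q_{i_\ell}\in\{1,2\}$ is manageable since the block sizes are uniformly bounded by $2m$ and elementary estimates on the number of such partitions can be absorbed into $C^m$.

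Having \eqref{eq:ConditionIntegrals} with $\alpha_n$ of order $1/\sqrt{c_\nu(T_n-1/\varrho)}$ (up to the explicit $M$- and $\varrho$-dependent constants appearing in $\tau_n$), the conclusion follows directly from Theorem \ref{thm:MDP}: the parameter is $\gamma=q-1=1$, and $c_{q,k}\alpha_n^{-1}$ matches the expression for $\tau_n$ stated in the corollary, yielding the three assertions $\boldsymbol{\mathrm{MDP}}$, $\boldsymbol{\mathrm{CI}}$ and $\boldsymbol{\mathrm{NACC}}$. The assumption $T_n>1/\varrho$ is precisely what is needed to ensure that the variance lower bound and hence $\tau_n$ is strictly positive.
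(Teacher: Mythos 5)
Your proposal is correct and follows essentially the same route as the paper: the chaos decomposition $Q(T_n)=I_1(f_n^{(1)})+I_2(f_n^{(2)})$ (the paper cites \cite[Theorem 7.2]{PeccatiSoleTaqquUtzet} for it), the variance lower bound $c_\nu(T_n-1/\varrho)$, exponential-decay estimates to bound each contracted integral by $C^m(T_n+\text{const})$ with the $u$-moments controlled by $M$, and an application of Theorem \ref{thm:MDP} with $q=k=2$, giving $\gamma=1$ and $c_{q,k}=1/512$. The only cosmetic difference is that you keep the time variables $t_\ell$ explicit and integrate them last, whereas the paper first integrates out $t$ to get the kernels $\tilde f_n^{(1)},\tilde f_n^{(2)}$ and then runs the analogous argument purely in the spatial variables.
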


\section{Proofs: Product formulas}\label{sec:ProofProducFormula}

The proof of Theorem \ref{thm:CumulantFormulaMultipleIntegrals} is prepared by the following lemma. Since the measure $\mu$ was assumed to be $\sigma$-finite, there exists a sequence $(A_n)_{n\in\N}$ of measurable subsets of $\BX$ with $A_n\subseteq A_{n+1}$ and $\mu(A_n)<\infty$ for $n\in\N$ satisfying $\bigcup_{n\in\N}A_n=\BX$. Now, for a function {$f:\BX^q\to\R$ with $q\in\N$ and $n\in\N$ define its truncation $f_n: \BX^q\to\R$ by
\begin{equation}\label{eq:DefFn}
f_n(x) := {\bf 1}_{A_n^q}(x){\bf 1}_{[-n,n]}(f(x))f(x),\qquad x\in\BX^q.
\end{equation}}

\begin{lemma}\label{lem:ProdForm}
Fix an even number $m\in\mathbb{N}$  and {$f\in L^2_s(\mu^q)$} for some $q\in\N$. Assume that
\begin{equation}\label{eqn:assumption_ProdForm}
\int_{\BX^{|\sigma|}}|(\otimes_{\ell=1}^mf)_\sigma|\,{\rm d}\mu^{|\sigma|}<\infty
\end{equation}
for all $\sigma\in\Pi_{\geq 2}^m(q)$. Then $\lim\limits_{n\to\infty}\BE I_q(f-f_n)^m = 0$ and
$$
\lim_{n\to\infty}\BE I_q(f_n)^m = \BE I_q(f)^m = \sum_{\sigma\in\Pi_{\geq 2}^m(q)}\int_{\BX^{|\sigma|}}(\otimes_{\ell=1}^m f)_\sigma\,\dint\mu^{|\sigma|}<\infty.
$$
\end{lemma}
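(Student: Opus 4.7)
The plan is to establish the identity first for the bounded, compactly supported truncations $f_n$, where classical product formulas apply, and then to pass to the limit via dominated convergence.

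First I would note that each $f_n$ is bounded (by $n$) and supported on $A_n^q$ with $\mu^q(A_n^q)<\infty$, hence $f_n\in L^r_s(\mu^q)$ for every $r\ge 1$ and remains symmetric. Moreover $f_n\to f$ pointwise on the full-measure set $\{|f|<\infty\}$, and $|f_n|\le |f|$ pointwise, so $\|f-f_n\|_{L^2(\mu^q)}\to 0$ by dominated convergence with dominator $4|f|^2$; by the $L^2$-isometry of the Wiener-It\^o integral, this yields $I_q(f_n)\to I_q(f)$ in $L^2$.

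Next I would invoke the classical moment formula for multiple Poisson stochastic integrals in the easy regime of bounded kernels of bounded support (see, e.g., \cite[Chapter 12]{LastPenroseBook} or \cite{PeTa}), which gives
$$
\BE I_q(f_n)^m \;=\; \sum_{\sigma\in\Pi_{\ge 2}^m(q)}\int_{\BX^{|\sigma|}}(\otimes_{\ell=1}^m f_n)_\sigma\,\dint\mu^{|\sigma|},
$$
and, by the same principle applied to the difference $f_k-f_n$ (again bounded with finite-measure support, hence in $L^1_s\cap L^2_s$) together with linearity of $I_q$,
$$
\BE\bigl(I_q(f_k)-I_q(f_n)\bigr)^m \;=\; \sum_{\sigma\in\Pi_{\ge 2}^m(q)}\int_{\BX^{|\sigma|}}(\otimes_{\ell=1}^m(f_k-f_n))_\sigma\,\dint\mu^{|\sigma|}.
$$

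Now I would pass to the limit on both right-hand sides using dominated convergence, with \eqref{eqn:assumption_ProdForm} providing the integrable majorants. On the one hand, $|(\otimes f_n)_\sigma|\le |(\otimes f)_\sigma|$ and $(\otimes f_n)_\sigma\to(\otimes f)_\sigma$ pointwise; on the other hand, $|(\otimes(f_k-f_n))_\sigma|\le 2^m|(\otimes f)_\sigma|$ and $(\otimes(f_k-f_n))_\sigma\to 0$ pointwise as $k,n\to\infty$. Hence
$$
\BE I_q(f_n)^m \;\to\; \sum_{\sigma\in\Pi_{\ge 2}^m(q)}\int_{\BX^{|\sigma|}}(\otimes_{\ell=1}^m f)_\sigma\,\dint\mu^{|\sigma|} \;=:\; L \;<\;\infty,
\qquad
\BE\bigl(I_q(f_k)-I_q(f_n)\bigr)^m \;\to\; 0.
$$
Since $m$ is even, the second convergence identifies $(I_q(f_n))_{n\in\N}$ as a Cauchy sequence in $L^m$, and its $L^m$-limit must coincide almost surely with its already-known $L^2$-limit $I_q(f)$. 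Thus $I_q(f)\in L^m$, $\BE I_q(f)^m=L$, and $\BE I_q(f-f_n)^m=\|I_q(f)-I_q(f_n)\|_{L^m}^m\to 0$.

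The only non-trivial ingredient is the base case, i.e.\ the moment formula for bounded compactly supported kernels, for which any version of the classical product/diagram formula in the literature is sufficient because its integrability hypotheses are then automatically satisfied; the real substance of the lemma is the bootstrapping from that easy regime to the minimal integrability assumption \eqref{eqn:assumption_ProdForm}, which is accomplished by the dominated convergence argument above.
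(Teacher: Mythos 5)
Your argument is correct and shares the paper's skeleton (truncate to the bounded, finite-support kernels $f_n$, apply the classical product formula of \cite{LPST} where its hypotheses hold automatically, and pass to the limit by dominated convergence with \eqref{eqn:assumption_ProdForm} supplying the majorant), but the crucial bootstrapping step is executed differently. The paper first upgrades $\|f-f_n\|_{L^2(\mu^q)}\to 0$ to convergence in distribution of $I_q(f_n)$ to $I_q(f)$, uses the Portmanteau theorem to get $\BE I_q(f)^m\leq\liminf_n\BE I_q(f_n)^m\leq\sum_{\sigma\in\Pi^m_{\geq 2}(q)}\int(\otimes_{\ell=1}^mf)_\sigma\,\dint\mu^{|\sigma|}$, applies this inequality with $f$ replaced by $f-f_n$ to obtain $\BE I_q(f-f_n)^m\to 0$, and finishes with the reverse triangle inequality in $L^m$. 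You instead apply the product formula to the differences $f_k-f_n$, deduce from dominated convergence that $(I_q(f_n))_{n\in\N}$ is Cauchy in $L^m$, and identify the $L^m$-limit with the $L^2$-limit $I_q(f)$; this yields all three assertions at once. Your route avoids the detour through distributional convergence and the Portmanteau/Fatou step, at the modest cost of a double-indexed dominated-convergence argument (which is fine: $|(\otimes_{\ell=1}^m(f_k-f_n))_\sigma|\leq 2^m(\otimes_{\ell=1}^m|f|)_\sigma$ and the integrand vanishes pointwise as $\min\{k,n\}\to\infty$, so a subsequence argument settles it) and of the embedding $L^m\hookrightarrow L^2$ on a probability space to match the two limits. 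Both proofs rest on the same two inputs — the formula for nice kernels and the assumed integrability — so the difference is one of packaging rather than substance; if anything, your version makes it slightly more transparent why $I_q(f)\in L^m$.
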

\begin{proof}

We note that the sequence of functions {$(f_n)_{n\in\mathbb{N}}$} satisfies the following properties:
\begin{itemize}
\item each $f_n$ is bounded and has support of finite $\mu$-measure,
\item $f_n$ converges to $f$ as $n\to\infty$ $\mu$-almost everywhere,
\item $|f_n|\leq|f|$.
\end{itemize}
By the dominated convergence theorem, it follows that $\|f-f_n\|^2_{L^2(\mu^q)}\to 0$, as $n\to\infty$. {Now,} let $\varepsilon>0$ and use Chebychev's inequality to see that
\begin{align*}
\BP(|I_q(f)-I_q(f_n)|\geq\varepsilon) = \BP(|I_q(f-f_n)|\geq\varepsilon) \leq {\BE I_q(f-f_n)^2\over\varepsilon^2} = \frac{q!\|f-f_n\|_{{L^2(\mu^q)}}^2}{\varepsilon^2},
\end{align*}
{where} the last expression tends to $0$, as $n\to\infty$. Hence, $I_q(f_n)$ converges to $I_q(f)$ in probability and in distribution, {as $n\to\infty$.} Thus, 
\begin{align*}
\BE I_q(f)^m \leq \liminf_{n\to\infty}\BE I_q(f_n)^m
\end{align*}
by the Portmanteau theorem. Note that the expression $\BE I_q(f)^m$ on the left-hand side is in fact well defined since we have assumed $m$ to be even. To $\BE I_q(f_n)^m$ we can now apply \cite[Theorem 1]{LPST} which yields
\begin{align*}
\BE I_q(f)^m \leq \liminf_{n\to\infty}\sum_{\sigma\in\Pi_{\geq 2}^m(q)}\int_{\BX^{|\sigma|}}(\otimes_{\ell=1}^mf_n)_\sigma\,\dint\mu^{|\sigma|} = \sum_{\sigma\in\Pi_{\geq 2}^m(q)}\int_{\BX^{|\sigma|}}(\otimes_{\ell=1}^mf)_\sigma\,\dint\mu^{|\sigma|},
\end{align*}
where the last step is verified by the dominated convergence theorem, which is applicable due to \eqref{eqn:assumption_ProdForm}. Consequently, replacing $f$ by $f-f_n$, we obtain
\begin{align*}
\BE I_q(f-f_n)^m \leq \sum_{\sigma\in\Pi_{\geq 2}^m(q)}\int_{\BX^{|\sigma|}}(\otimes_{\ell=1}^m(f-f_n))_\sigma\,\dint\mu^{|\sigma|}
\end{align*}
for all $n\in\N$. Once again by the dominated convergence theorem and \eqref{eqn:assumption_ProdForm} the right-hand side tends to $0$, as $n\to\infty$. This proves the first assertion.

On the other hand, using the reverse triangle inequality we see that
$$
|(\BE I_q(f)^m)^{1/m}-(\BE I_q(f_n)^m)^{1/m}| \leq (\BE I_q(f-f_n)^m)^{1/m}.
$$
Thus, by the convergence of the right-hand side to zero and by \cite[Theorem 1]{LPST},
\begin{align*}
\BE I_q(f)^m &= \lim_{n\to\infty}\BE I_q(f_n)^m \\
&= \lim_{n\to\infty}\sum_{\sigma\in\Pi_{\geq 2}^m(q)}\int_{\BX^{|\sigma|}}(\otimes_{\ell=1}^mf_n)_\sigma\,\dint\mu^{|\sigma|}\\
& = \sum_{\sigma\in\Pi_{\geq 2}^m(q)}\int_{\BX^{|\sigma|}}(\otimes_{\ell=1}^mf)_\sigma\,\dint\mu^{|\sigma|},
\end{align*}
where the last step follows once more by the dominated convergence theorem. This completes the argument.
\end{proof}

\begin{proof}[Proof of Theorem \ref{thm:CumulantFormulaMultipleIntegrals}]
For each $\ell\in\{1,\ldots,m\}$ and $n\in\N$ define the function $f_n^{(\ell)}$ as in \eqref{eq:DefFn}. Then, using \cite[Theorem 1]{LPST} in the first and the dominated convergence theorem, {which is applicable due to \eqref{eq:CumThmAss2},} in the second step, we have that
\begin{align*}
\lim_{n\to\infty}\BE\Big[\prod_{\ell=1}^mI_{q_\ell}(f_n^{(\ell)})\Big] &= \lim_{n\to\infty}\sum_{\sigma\in{\Pi}_{\geq 2}(q_1,\hdots,q_m)} \int_{\BX^{|\sigma|}} (\otimes_{\ell=1}^m f_n^{(\ell)})_\sigma \, \dint\mu^{|\sigma|}\\
&=\sum_{\sigma\in{\Pi}_{\geq 2}(q_1,\hdots,q_m)} \int_{\BX^{|\sigma|}} (\otimes_{\ell=1}^m {f^{(\ell)}} )_\sigma \, \dint\mu^{|\sigma|}.
\end{align*}
From Lemma \ref{lem:ProdForm}, {whose assumption is satisfied by \eqref{eq:CumThmAss1},} one {deduces} that, for $\ell\in\{1,\ldots,m\}$,
\begin{equation}\label{eqn:convergence_Ito-integrals}
\lim_{n\to\infty}\BE I_{q_\ell}(f^{(\ell)}-f_n^{(\ell)})^{\widetilde{m}} = 0\qquad\text{and}\qquad\lim_{n\to\infty}\BE I_{q_\ell}(f_n^{(\ell)})^{\widetilde{m}} = \BE I_{q_\ell}(f^{(\ell)})^{\widetilde{m}}<\infty.
\end{equation}
Thus, using H\"older's inequality, we find that
\begin{align*}
&\lim_{n\to\infty} \Big|\BE\Big[\prod_{\ell=1}^mI_{q_\ell}(f^{(\ell)})\Big] - \BE\Big[\prod_{\ell=1}^mI_{q_\ell}(f_n^{(\ell)})\Big] \Big|\\
& \leq \lim_{n\to\infty}\sum_{\ell=1}^m\BE\Big|\prod_{j=1}^{\ell-1}I_{q_j}(f^{(j)})\times I_{q_\ell}(f^{(\ell)}-f_n^{(\ell)})\times\prod_{j=\ell+1}^mI_{q_j}(f_n^{(j)})\Big|\\
&\leq\lim_{n\to\infty} \sum_{\ell=1}^m \prod_{j=1}^{\ell-1}(\BE|I_{q_j}(f^{(j)})^m|)^{1/m}\times(\BE|I_{q_\ell}(f^{(\ell)}-f_n^{(\ell)})^m|)^{1/m}\times\prod_{j=\ell+1}^{m}(\BE|I_{q_j}(f_n^{(j)})^m|)^{1/m}\\
&\leq\lim_{n\to\infty} \sum_{\ell=1}^m  \prod_{j=1}^{\ell-1}(\BE I_{q_j}(f^{(j)})^{\widetilde{m}})^{1/\widetilde{m}}\times(\BE I_{q_\ell}(f^{(\ell)}-f_n^{(\ell)})^{\widetilde{m}})^{1/\widetilde{m}}\times\prod_{j=\ell+1}^{m}(\BE I_{q_j}(f_n^{(j)})^{\widetilde{m}})^{1/\widetilde{m}},
\end{align*}
where in the last step we used that $\widetilde{m}\geq m$ is even. By \eqref{eqn:convergence_Ito-integrals}, the two products ranging over $j\in\{1,\ldots,\ell-1\}$ and $j\in\{\ell+1,\ldots,m\}$, respectively, converge to finite constants, while the middle term $(\BE I_{q_\ell}(f^{(\ell)}-f_n^{(\ell)})^{\widetilde{m}})^{1/\widetilde{m}}$ tends to zero, as $n\to\infty$. This completes the proof of the first part of Theorem \ref{thm:CumulantFormulaMultipleIntegrals}.

The formula for the joint cumulants eventually follows from this exactly as in the proof of Theorem 1 in \cite{LPST}.
\end{proof}

As a preparation for the proof of Theorem \ref{thm:CumulantFormulaUStat} we need to introduce an operation on partitions. Fix integers $m\geq 1$, $q_1,\ldots,q_m,j_1,\ldots,j_m\geq 1$ with $j_1\leq q_1,\ldots,j_m\leq q_m$. First, define $\tau_{j_1,\ldots,j_m}^{q_1,\ldots,q_m}:\{1,\ldots,j_1+\ldots+j_m\}\to\{1,\ldots,q_1+\ldots+q_m\}$ by putting 
$$
\tau_{j_1,\ldots,j_m}^{q_1,\ldots,q_m}(u):=\sum_{i=1}^{k-1}q_i+r_k\qquad\text{if } u=\sum_{i=1}^{k-1}j_i+r_k,\;r_k\leq j_k\text{ for some }k\in\{1,\ldots,m\}.
$$
Then we define the mapping $\Theta_{j_1,\ldots,j_m}^{q_1,\ldots,q_m}:{\Pi}(j_1,\ldots,j_m)\to\Pi(q_1,\ldots,q_m)$, which assigns to a partition $\pi=\{B_1,\ldots,B_v\}\in{\Pi}(j_1,\ldots,j_m)$ with $v\geq 1$ blocks the partition
$$
\Theta_{j_1,\ldots,j_m}^{q_1,\ldots,q_m}(\pi):=\{\tau(B_1),\ldots,\tau(B_v)\}\cup\bigcup_{z\in\{1,\ldots,q_1+\ldots+q_m\}\atop z\notin\cup_{w=1}^v\tau(B_w)}\{\{z\}\},\qquad \tau:=\tau_{j_1,\ldots,j_m}^{q_1,\ldots,q_m}.
$$
Let us explain the mechanism behind the mapping $\Theta_{j_1,\ldots,j_m}^{q_1,\ldots,q_m}$.  The partition $\sigma:=\Theta_{j_1,\ldots,j_m}^{q_1,\ldots,q_m}(\pi)\in\Pi(q_1,\ldots,q_m)$ arises from $\pi\in\Pi(j_1,\ldots,j_m)$ by first adding to the first row corresponding to the graphical representation of $\pi$ the elements $j_1+1,\ldots,q_1$, each of which becomes an individual block of $\sigma$. In the second row corresponding to $\pi$ we first shift the labels of the elements there by $q_1-j_1$ and then add the elements $q_1+j_1+1,\ldots,q_1+q_2$, which again become new individual blocks of $\sigma$. In the third row the elements are shifted by $q_1-j_1+q_2-j_2$ and then new elements $q_1+q_2+j_3+1,\ldots,q_1+q_2+q_3$ are added as individual blocks etc. The procedure is illustrated in Figure \ref{fig:2} by an example.

\begin{figure}[t]
\begin{center}
\includegraphics[width=0.6\columnwidth]{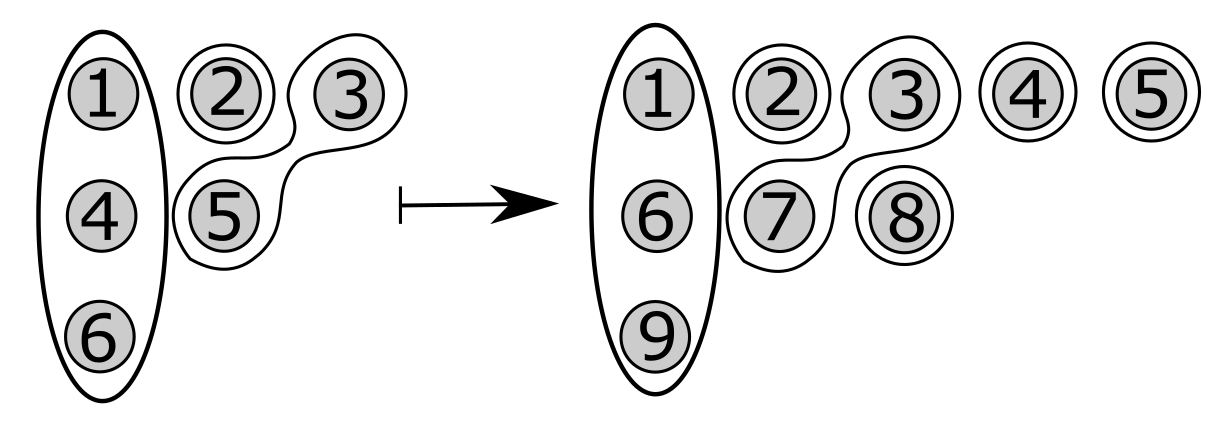}
\caption{\small Illustration of the mapping $\Theta_{3,2,1}^{5,3,1}:\Pi(3,2,1)\to\Pi(5,3,1)$.}
\label{fig:2}
\end{center}
\end{figure}

\begin{proof}[Proof of Theorem \ref{thm:CumulantFormulaUStat}]
Fix $i\in\{1,\ldots,m\}$. Applying \eqref{eqn:ChaosExpansionUstatistic} to the Poisson U-statistic $S_i$ we have that
\begin{equation}\label{eq:ChaosUStat}
S_i-\BE S_i = \sum_{j=1}^{q_i}I_j(f_j^{(i)})
\end{equation}
with functions $f_j^{(i)}:\BX^j\to\R$ given by
$$
f_j^{(i)}(x_1,\ldots,x_j) := {q_i\choose j}\int_{\BX^{q_i-j}}f^{(i)}(x_1,\ldots,x_j,x_{j+1},\ldots,x_{q_i})\,\mu^{q_i-j}(\dint(x_{j+1},\ldots,x_{q_i}))
$$
{for $j\in\{1,\hdots,q_i\}$.} By \eqref{eq:UStatThmAss1} and since $\widetilde{m}$ is even, we have that $f_j^{(i)}\in L^2_s(\mu^j)$ for each $j\in\{1,\ldots,q_i\}$ and that $f_j^{(i)}$ satisfies \eqref{eq:CumThmAss1}. Indeed, for every {$\pi\in\Pi_{\geq 2}^{\widetilde{m}}(j)$} one has that
$$
\int_{\BX^{|\pi|}}|(\otimes_{\ell=1}^{\widetilde{m}}f_j^{(i)})_\pi|\,\dint\mu^{|\pi|} \leq {q_i\choose j}^{\widetilde{m}}\int_{\BX^{|\sigma|}}|(\otimes_{\ell=1}^{\widetilde{m}}f^{(i)})_\sigma|\,\dint\mu^{|\sigma|},
$$
where we applied the definition of $f_j^{(i)}$, used the triangle inequality and put $\sigma:=\Theta_{j,\ldots,j}^{q_i,\ldots,q_i}(\pi)\in\overline{\Pi}^{\widetilde{m}}(q_i)$. Similarly, for $j_\ell\in\{1,\ldots,q_\ell\}$ with $\ell\in\{1,\ldots,m\}$ and $\pi\in\Pi_{\geq 2}(j_1,\ldots,j_m)$ we have that
\begin{align*}
\int_{\BX^{|\pi|}}|(\otimes_{\ell=1}^m f_{j_\ell}^{(\ell)})_\pi|\,\dint\mu^{|\pi|}\leq \prod_{\ell=1}^m{q_\ell\choose j_\ell}\int_{\BX^{|\sigma|}}|(\otimes_{\ell=1}^mf^{(\ell)})_\sigma|\,\dint\mu^{|\sigma|}
\end{align*}
for $\sigma:=\Theta_{j_1,\ldots,j_m}^{q_1,\ldots,q_m}(\pi)\in\overline{\Pi}(q_1,\hdots,q_m)$. Thus, the functions $f_{j_\ell}^{(\ell)}$, $\ell\in\{1,\ldots,m\}$, also satisfy \eqref{eq:CumThmAss2} due to \eqref{eq:UStatThmAss2}.

We can now repeatedly apply Theorem \ref{thm:CumulantFormulaMultipleIntegrals} to \eqref{eq:ChaosUStat} to conclude that
\begin{align*}
&\BE\Big[\prod_{\ell=1}^m(S_\ell-\BE S_\ell)\Big]\\
&= \sum_{1\leq j_1\leq q_1}\cdots\sum_{1\leq j_m\leq q_m}\BE\Big[\prod_{\ell=1}^m {I_{j_\ell}(f_{j_\ell}^{(\ell)})} \Big]\\
&=\sum_{1\leq j_1\leq q_1}\cdots\sum_{1\leq j_m\leq q_m}\sum_{\pi\in\Pi_{\geq 2}({j_1},\ldots,{j_m})}\int_{\BX^{|\pi|}}(\otimes_{\ell=1}^mf_{j_\ell}^{(\ell)})_\pi\,\dint\mu^{|\pi|}\\
&=\sum_{1\leq j_1\leq q_1}\cdots\sum_{1\leq j_m\leq q_m}\Big(\prod_{\ell=1}^m{q_\ell\choose j_\ell}\Big)\\
&\qquad\qquad\times\sum_{\pi\in\Pi_{\geq 2}({j_1},\ldots,{j_m})}\int_{\BX^{|\Theta_{j_1,\ldots,j_m}^{q_1,\ldots,q_m}(\pi)|}}(\otimes_{\ell=1}^mf^{(\ell)})_{\Theta_{j_1,\ldots,j_m}^{q_1,\ldots,q_m}(\pi)}\,\dint\mu^{|\Theta_{j_1,\ldots,j_m}^{q_1,\ldots,q_m}(\pi)|}.
\end{align*}
Next, we note that $\Theta_{j_1,\ldots,j_m}^{q_1,\ldots,q_m}(\pi)$ has $q_\ell-j_\ell$ singleton blocks at the end of row $\ell\in\{1,\ldots,m\}$ in the representing diagram. Moreover, the prefactor $\prod_{\ell=1}^m{q_\ell\choose j_\ell}$ is precisely the number {of} possibilities to choose $q_\ell-j_\ell$ singletons in each row $\ell\in\{1,\ldots,m\}$ of a diagram corresponding to a partition in $\overline{\Pi}(q_1,\ldots,q_m)$. This together with the symmetry of the functions $f^{(1)},\ldots,f^{(m)}$ implies that
\begin{align*}
&\Big(\prod_{\ell=1}^m{q_\ell\choose j_\ell}\Big)\sum_{\pi\in\Pi_{\geq 2}({j_1},\ldots,{j_m})}\int_{\BX^{|\Theta_{j_1,\ldots,j_m}^{q_1,\ldots,q_m}(\pi)|}}(\otimes_{\ell=1}^mf^{(\ell)})_{\Theta_{j_1,\ldots,j_m}^{q_1,\ldots,q_m}(\pi)}\,\dint\mu^{|\Theta_{j_1,\ldots,j_m}^{q_1,\ldots,q_m}(\pi)|} \\
&\qquad =\sum_{\substack{\sigma\in\overline{\Pi}({q_1},\ldots,{q_m})\\ \sigma\text{ has $q_1-j_1$ singletons in row $1$}\\\cdots\\ \sigma\text{ has $q_m-j_m$ singletons in row $m$}}}\int_{\BX^{|\sigma|}}(\otimes_{\ell=1}^mf^{(\ell)})_{\sigma}\,\dint\mu^{|\sigma|}.
\end{align*}
Thus,
\begin{align*}
\BE\Big[\prod_{\ell=1}^m(S_\ell-\BE S_\ell)\Big] = \sum_{\sigma\in\overline{\Pi}(q_1,\ldots,q_m)}\int_{\BX^{|\sigma|}}(\otimes_{\ell=1}^mf^{(\ell)})_{\sigma}\,\dint\mu^{|\sigma|},
\end{align*}
proving the first part of the theorem.

For the second part we use \eqref{eq:ChaosUStat}, multilinearity of joint cumulants and apply Theorem \ref{thm:CumulantFormulaMultipleIntegrals} to see that
\begin{align*}
&\cum(S_1,\hdots,S_m)\\
&= \sum_{1\leq j_1\leq q_1}\cdots\sum_{1\leq j_m\leq q_m}\cum(I_{j_1}(f_{j_1}^{(1)}),\hdots,I_{j_m}(f_{j_m}^{(m)})) \\
&=\sum_{1\leq j_1\leq q_1}\cdots\sum_{1\leq j_m\leq q_m}\sum_{\pi\in\widetilde{\Pi}_{\geq 2}(j_1,\hdots,j_m)} \int_{\BX^{|\pi|}} (\otimes_{\ell=1}^m f_{j_\ell}^{(\ell)})_\pi \, \dint\mu^{|\pi|} \allowdisplaybreaks\\
&=\sum_{1\leq j_1\leq q_1}\cdots\sum_{1\leq j_m\leq q_m}\Big(\prod_{\ell=1}^m{q_\ell\choose j_\ell}\Big)\\
&\qquad\qquad\times\sum_{\pi\in\widetilde{\Pi}_{\geq 2}(j_1,\hdots,j_m)} \int_{\BX^{|\Theta_{j_1,\ldots,j_m}^{q_1,\ldots,q_m}(\pi)|}} (\otimes_{\ell=1}^m f^{(\ell)})_{\Theta_{j_1,\ldots,j_m}^{q_1,\ldots,q_m}(\pi)} \, \dint\mu^{|\Theta_{j_1,\ldots,j_m}^{q_1,\ldots,q_m}(\pi)|} \allowdisplaybreaks \\
&=\sum_{1\leq j_1\leq q_1}\cdots\sum_{1\leq j_m\leq q_m}\sum_{\substack{\sigma\in\widetilde{\Pi}({q_1},\ldots,{q_m})\\ \sigma\text{ has $q_1-j_1$ singletons in row $1$}\\\cdots\\ \sigma\text{ has $q_m-j_m$ singletons in row $m$}}}\int_{\BX^{|\sigma|}}(\otimes_{\ell=1}^mf^{(\ell)})_{\sigma}\,\dint\mu^{|\sigma|}\\
&=\sum_{\sigma\in\widetilde{\Pi}(q_1,\hdots,q_m)} \int_{\BX^{|\sigma|}} (\otimes_{\ell=1}^m f^{(\ell)})_\sigma \, \dint\mu^{|\sigma|}\,,
\end{align*}
where we used the same arguments as above.
This completes the proof.
\end{proof}

\section{Proofs: Moderate deviation estimates}\label{sec:ProofsMainResults}

The first aim of this section is to upper bound the right-hand sides of the cumulant formulas in Theorems \ref{thm:CumulantFormulaMultipleIntegrals} and \ref{thm:CumulantFormulaUStat} in such a way that Proposition \ref{prop:CumulantBoundImplyMDPandConcentration} can be applied. For that purpose, the following estimates are going to be important. Recall the definitions of the classes $\Pi^m(q)$ and $\widetilde{\Pi}^m_{\geq 2}(q)$ of partitions from Subsection \ref{subsec:Partitions}.

\begin{proposition}\label{Prop:tildePi}
For any $q\geq 2$ one has
\begin{equation}\label{eq:UpperBoundPartitions}
|\Pi^m(q)| \leq {q^{qm}} (m!)^q\,, \qquad m\in\N\,.
\end{equation}
Moreover, there are no constants $c>0$ and $\gamma\in (0,q)$ such that
\begin{equation}\label{eq:LowerBoundPartitions}
|\widetilde{\Pi}^m_{\geq 2}(q)| \leq c^m (m!)^{\gamma}\,,\qquad m\in\N\,.
\end{equation}
\end{proposition}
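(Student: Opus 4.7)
For \eqref{eq:UpperBoundPartitions}, my plan is to encode each $\sigma\in\Pi^m(q)$ canonically by labelling its blocks in the order of their first appearance (reading the $m\times q$ array row by row, left to right). Each row $\ell$ then induces an injection $\alpha_\ell:[q]\to[q\ell]$, because by the end of row $\ell$ at most $q\ell$ block-labels have been issued, and the tuple $(\alpha_1,\ldots,\alpha_m)$ reconstructs $\sigma$ uniquely. Since the number of injections $[q]\to[q\ell]$ is bounded by $(q\ell)^q$, multiplying over $\ell$ yields
$|\Pi^m(q)|\leq\prod_{\ell=1}^m(q\ell)^q=q^{qm}(m!)^q$.

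For the lower bound \eqref{eq:LowerBoundPartitions} I would argue by contradiction, linking $|\widetilde{\Pi}^m_{\geq 2}(q)|$ to the cumulants of the Poisson--Charlier sums from Theorem \ref{thm:CharlierPolynomials}. Let $(Z_k)_{k\in\N}$ be i.i.d.\ Poisson$(1)$, put $S_n:=\sum_{k=1}^n H_q(Z_k)$, and choose $B\in\cX$ with $\mu(B)=1$. The identity $H_q(Z_1)\stackrel{d}{=}I_q({\bf 1}_B^{\otimes q})$ together with Theorem \ref{thm:CumulantFormulaMultipleIntegrals} gives $\cum_m(H_q(Z_1))=|\widetilde{\Pi}^m_{\geq 2}(q)|$: idempotency of ${\bf 1}_B$ collapses each integrand $({\bf 1}_B^{\otimes qm})_\sigma$ to ${\bf 1}_B^{\otimes|\sigma|}$ with unit integral. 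By independence $\cum_m(S_n)=n|\widetilde{\Pi}^m_{\geq 2}(q)|$, and $\BV S_n=nq!$ by \eqref{eq:VarianceIntegrals}.

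Now assume for contradiction that $|\widetilde{\Pi}^m_{\geq 2}(q)|\leq c^m(m!)^\gamma$ for some $c>0$ and $\gamma\in(0,q)$. Normalising $\widetilde{S}_n:=S_n/\sqrt{nq!}$ produces
$$
|\cum_m(\widetilde{S}_n)|\leq (c/\sqrt{q!})^m(m!)^\gamma/(\sqrt n)^{m-2}.
$$
Fix $\delta\in(0,q-\gamma)$ and set $\widetilde\gamma:=q-1-\delta\in(\gamma-1,q-1)$. The factorial surplus $(m!)^{q-\gamma-\delta}$ dominates the exponential $(c/\sqrt{q!})^m$ for all sufficiently large $m$, while finitely many small $m$ contribute an absolute constant that can be absorbed into a redefinition of $\Delta_n$ of order $\sqrt{n}$ (using $m-2\geq 1$). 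This recasts the bound as $(m!)^{1+\widetilde\gamma}/\Delta_n^{m-2}$, so Proposition \ref{prop:CumulantBoundImplyMDPandConcentration} implies that $(\widetilde{S}_n/a_n)$ satisfies a MDP with rate $z^2/2$ whenever $a_n\to\infty$ and $a_n=o(\Delta_n^{1/(1+2\widetilde\gamma)})=o(n^{1/(4q-2-4\delta)})$. In particular, the MDP holds for some polynomial scale $a_n=n^\alpha$ with $\alpha>1/(4q-2)$, contradicting Theorem \ref{thm:CharlierPolynomials}(b), which precludes any non-trivial MDP once $a_n$ exceeds $n^{1/(4q-2)}$ up to sub-polynomial factors.

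The main obstacle I foresee is the careful bookkeeping of the exponential prefactor $c^m$ in the hypothetical bound: it must be absorbed into a power of $m!$ without driving $\widetilde\gamma$ up to the critical value $q-1$. The strict inequality $\gamma<q$ furnishes exactly the required slack through the choice of $\delta\in(0,q-\gamma)$; the remainder of the argument reduces to matching exponents between Proposition \ref{prop:CumulantBoundImplyMDPandConcentration} and Theorem \ref{thm:CharlierPolynomials}(b).
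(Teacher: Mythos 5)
Your proof of \eqref{eq:UpperBoundPartitions} is correct and is essentially the paper's argument: the paper builds $\sigma\in\Pi^{m+1}(q)$ from $\hat\sigma\in\Pi^m(q)$ by letting each of the $q$ new elements either join one of the at most $qm$ existing blocks or open a new one, giving the recursion $|\Pi^{m+1}(q)|\le q^q(m+1)^q|\Pi^m(q)|$; your row-by-row encoding into injections $[q]\to[q\ell]$ is the same count written multiplicatively. For \eqref{eq:LowerBoundPartitions}, however, you take a genuinely different route, and it is valid. The paper argues directly: for $m=uk$ it constructs an explicit subfamily of $\widetilde\Pi^{uk}_{\ge 2}(q)$ (partitioning each ``column'' into blocks of size $k$, with a cyclic modification in the second column to force $|\sigma^*|=1$), counts it by multinomial coefficients, and shows the count beats $c^m(m!)^{\gamma}$ along $m=uk_0$. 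You instead argue by contradiction through the probabilistic machinery: the identity $\cum_m(I_q(\mathbf 1_B^{\otimes q}))=|\widetilde\Pi^m_{\ge 2}(q)|$ (correct, by Theorem \ref{thm:CumulantFormulaMultipleIntegrals} with unit-measure $B$), additivity of cumulants over the $n$ independent summands, and Proposition \ref{prop:CumulantBoundImplyMDPandConcentration} would, under the hypothetical bound with $\gamma<q$, yield an MDP for $S_n/(a_n\sqrt{nq!})$ at some polynomial scale $a_n=n^{\alpha}$ with $\alpha>1/(4q-2)$ (your choice of $\delta\in(0,q-\gamma)$ and the absorption of $c^m$ into $(m!)^{q-\gamma-\delta}$ both check out, and one can also keep $\alpha<1/2$ since $q\ge 2$), contradicting Theorem \ref{thm:CharlierPolynomials}(b). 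This is logically sound and non-circular, since Theorem \ref{thm:CharlierPolynomials} is proved via Arcones' criterion for i.i.d.\ sums and Theorem \ref{thm:CumulantFormulaMultipleIntegrals} is proved independently of the proposition. The trade-off: the paper's proof is self-contained, purely combinatorial and quantitative (it exhibits the witnessing subsequence explicitly), whereas yours is shorter on combinatorics but imports the full MDP apparatus and reverses the direction in which the paper itself uses these two results (the paper derives optimality of the cumulant method from the proposition; you derive the proposition from optimality of the MDP scale).
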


\begin{proof}
We can construct each partition {$\sigma\in \Pi^{m+1}(q)$} by taking a partition {$\hat{\sigma}\in\Pi^{m}(q)$} and deciding for each $i\in\{qm+1,\hdots,q(m+1)\}$ whether it joins an existing block or forms a new block on its own. Since $\hat{\sigma}$ has at most $qm$ blocks, we obtain from a partition $\hat{\sigma}\in\Pi_m(q)$ at most $(qm+1)^q$ new partitions. This implies the inequality
$$
|\Pi^{m+1}(q)| \leq (qm+1)^q |\Pi^m(q)| \leq q^q (m+1)^q |\Pi^m(q)|\,.
$$
Putting $C:=\max\{|\Pi^1(q)|,q^q\}$, this yields {$|\Pi^m(q)| \leq C^m (m!)^q$ for $m\in\N$ by iteration. Because of $|\Pi^1(q)|=1$, this is \eqref{eq:UpperBoundPartitions}.}

In order to show the second part, we prove that for any $\tilde{\gamma}\in(0,q)$ there exists a constant $\hat{c}>0$ such that
\begin{equation}\label{eq:ProofLowerBound}
\limsup_{m\to\infty}\frac{|\widetilde{\Pi}^m_{\geq 2}(q)|}{\hat{c}^m (m!)^{\tilde{\gamma}}}>1\,.
\end{equation}
This in turn implies \eqref{eq:LowerBoundPartitions} by the following consideration. Assume that $\gamma\in(0,q)$ and $c>0$ satisfy \eqref{eq:LowerBoundPartitions}. For $\tilde{\gamma}:=(\gamma+q)/2$ there is a constant $\hat{c}>0$ such that
$$
1<\limsup_{m\to\infty}\frac{|\widetilde{\Pi}^m_{\geq 2}(q)|}{\hat{c}^m (m!)^{\tilde{\gamma}}}=\limsup_{m\to\infty}\frac{|\widetilde{\Pi}^m_{\geq 2}(q)|}{c^m (m!)^\gamma (\hat{c}/c)^m (m!)^{\tilde{\gamma}-\gamma}}\,.
$$
Since, by construction, $(\hat{c}/c)^m (m!)^{\tilde{\gamma}-\gamma}\to\infty$, as $m\to\infty$, this means that
$$
\limsup_{m\to\infty} \frac{|\widetilde{\Pi}_{\geq 2}^m(q)|}{c^m (m!)^\gamma}>1\,,
$$
which contradicts \eqref{eq:LowerBoundPartitions}.

So, let us prove \eqref{eq:ProofLowerBound} for a given $\tilde{\gamma}\in(0,q)$. Let $k,u\in\N$ with $k$ even and put $m=uk$. In this situation we can construct partitions $\sigma\in\widetilde{\Pi}_{\geq 2}^{uk}(q)$ in the following way:
\begin{itemize}
\item [1)] We group $\{1+(\ell-1) q: \ell\in\{1,\hdots,uk\}\}$ into $u$ blocks of size $k$.
\item [2)] Now, we order these blocks as follows: $\{qi_1^{(1)}+1,\hdots,qi_k^{(1)}+1\},\hdots,\{q i_1^{(u)}+1,\hdots,qi_k^{(u)}+1\}$ with $i_1^{(1)}<i_1^{(2)}<\hdots<i_1^{(u)}$ and $i_1^{(\ell)}<i_2^{(\ell)}<\hdots<i_k^{(\ell)}$ for $\ell\in\{1,\hdots,u\}$. Next, we add the blocks $\{qi_1^{(\ell)}+2,qi_k^{(\ell+1)}+2\}$, $\ell\in\{1,\hdots,u-1\}$, and $\{qi_1^{(u)}+2,qi_k^{(1)}+2\}$. Then we form blocks of size $k-2$ from the remaining elements of $\{2+(\ell-1)q: \ell\in\{1,\hdots,uk\}\}$ in an arbitrary way.
\item [3)] For any $j\in\{3,\hdots, q\}$ we group $\{j+(\ell-1)q:\ell\in\{1,\hdots,uk\}\}$ to blocks of size $k$.
\end{itemize}
Note that steps 1) and 2) are sufficient to ensure that $\sigma\in\widetilde{\Pi}^{uk}(q)$. This gives much flexibility to choose the remaining blocks in step 3). Since all blocks contain at least two elements, we obtain $\sigma\in\widetilde{\Pi}_{\geq 2}^{uk}(q)$.

According to steps 1)--3) there are
$$
\frac{(uk)!}{u! (k!)^u} \cdot \frac{((k-2)u)!}{u! ((k-2)!)^u} \cdot\bigg(\frac{(uk)!}{u! (k!)^u} \bigg)^{q-2}
$$
possibilities to form such partitions $\sigma\in\widetilde{\Pi}_{\geq 2}^{uk}(q)$. It is easy to verify that
$$
\frac{((k-2)u)!}{u! ((k-2)!)^u} \geq  \frac{((k-2)u)!}{u! (k!)^u} \geq \frac{(uk)!}{u! (k!)^u (2u)! \binom{uk}{(k-2)u}} \geq \frac{(uk)!}{u! (k!)^u (2u)! 2^{uk}}\,,
$$
whence
$$
|\widetilde{\Pi}_{\geq 2}^{uk}(q)| \geq \frac{((uk)!)^q}{(k!)^{uq} (u!)^q (2u)! 2^{uk}} \geq \frac{((uk)!)^{\tilde{\gamma}}}{(2(k!)^{q/k})^{uk}} \frac{((uk)!)^{q-\tilde{\gamma}}}{(u! (2u)!)^q}.
$$
Since $(uk)!\geq ((2u)!)^{k/2}$ for all even $k\in\mathbb{N}$, we can choose an even $k_0\in\N$ such that
$$
\frac{((uk_0)!)^{q-\tilde{\gamma}}}{(u! (2u)!)^q}>1\qquad\text{for all} \qquad u\in\N\,.
$$
Consequently, we have that
$$
\limsup_{u\to\infty}\frac{|\widetilde{\Pi}_{\geq 2}^{uk_0}(q)|}{((uk_0)!)^{\tilde{\gamma}}(2(k_0!)^{q/k_0})^{-uk_0}}>1\,,
$$
which yields \eqref{eq:ProofLowerBound} and completes the proof.
\end{proof}

\begin{proof}[Proof of Theorem \ref{thm:MDP}]
We let $Y_n$ be as in the statement of the theorem and fix $m\geq 3$. Then,
\begin{align*}
\Big|\cum_m\Big({Y_n\over\sqrt{\BV Y_n}}\Big)\Big| 
&= {1\over(\BV Y_n)^{m/2}}|\cum(Y_n,\ldots,Y_n)|\\
&\leq {1\over(\BV Y_n)^{m/2}}\sum_{(i_1,\ldots,i_m)\in[k]^m} |\cum(I_{q_{i_1}}(f_n^{(i_1)}),\ldots,I_{q_{i_m}}(f_n^{(i_m)}))|\,,
\end{align*}
where we used the definition of $\cum_m(\,\cdot\,)$ in terms of a joint cumulant and then applied the multilinearity of the latter together with the triangle inequality. 
Next, we apply the cumulant formula in Theorem \ref{thm:CumulantFormulaMultipleIntegrals} to deduce that
\begin{align*}
|\cum({I_{q_{i_1}}(f_n^{(i_1)})},\ldots,{I_{q_{i_m}}(f_n^{(i_m)}))}| &\leq |\widetilde{\Pi}_{\geq 2}(q_{i_1},\ldots,q_{i_m})|\\
&\qquad\times\sup_{\sigma\in\widetilde{\Pi}_{\geq 2}(q_{i_1},\ldots,q_{i_m})}\Big|\int_{\BX^{|\sigma|}}(\otimes_{\ell=1}^mf_n^{(i_\ell)})_\sigma\,\dint\mu^{|\sigma|}\Big|\,.
\end{align*}
Recalling $q=\max\{q_1,\ldots,q_k\}$, we see that according to {the} first part of Proposition \ref{Prop:tildePi}, we have
\begin{equation}\label{eqn:Bound_widetilde_Pi}
|\widetilde{\Pi}_{\geq 2}(q_{i_1},\ldots,q_{i_m})| \leq |\Pi^m(q)| \leq {q^{qm}} (m!)^q\,,
\end{equation}
while the assumption \eqref{eq:ConditionIntegrals} of the theorem ensures that
\begin{align*}
\sup_{\sigma\in\widetilde{\Pi}_{\geq 2}(q_{i_1},\ldots,q_{i_m})} {1\over(\BV Y_n)^{m/2}}\Big|\int_{\BX^{|\sigma|}}(\otimes_{\ell=1}^mf_n^{(i_\ell)})_\sigma\,\dint\mu^{|\sigma|}\Big| \leq \alpha_n^{m-2}\,.
\end{align*}
Putting the previous estimates together leads to the bound
\begin{align*}
{ \Big|\cum_m\Big({Y_n\over\sqrt{\BV Y_n}}\Big)\Big| \leq {q^{qm}} (m!)^q\alpha_n^{m-2}\,k^m 
\leq (m!)^q\,( q^{3q}k^3 \alpha_n )^{m-2}} 
\end{align*}
{for all $n\in\mathbb{N}$ and $m\geq 3$. Thus, the statements of the theorem follow from Proposition \ref{prop:CumulantBoundImplyMDPandConcentration}.} 
\end{proof}

\begin{proof}[Proof of Theorem \ref{thm:MDP2}]
The argument is very similar to the one for Theorem \ref{thm:MDP}. Basically, the only change is that now one uses the cumulant formula in Theorem \ref{thm:CumulantFormulaUStat} instead of the one in Theorem \ref{thm:CumulantFormulaMultipleIntegrals} and assumption \eqref{eq:ConditionUstatistics} instead of \eqref{eq:ConditionIntegrals}. We leave the details to the reader.
\end{proof}

\begin{remark}\rm 
Note that the second part of Proposition \ref{Prop:tildePi} implies that one cannot achieve a smaller exponent than $q$ at $m!$ in \eqref{eqn:Bound_widetilde_Pi}. This means that the choice $\gamma=q-1$ in the cumulant bound \eqref{eq:CumumantBound} cannot be improved systematically. Thus, Theorem \ref{thm:MDP} is the best result one can obtain by the method of cumulants. This agrees with the discussion next to Theorem \ref{thm:CharlierPolynomials}, which shows that Theorem \ref{thm:MDP} yields for the situation considered in Theorem \ref{thm:CharlierPolynomials} up to subpolynomial factors the optimal range of scales for the MDP. 

Also the monograph \cite{SaulisBuch} discusses cumulant bounds for multiple stochastic integrals with respect to compensated Poisson processes in Chapter 5.2. However, it appears that the estimate for the number of the involved partitions there is not correct. As discussed above the exponent at $m!$ cannot be smaller than $q$, while the exponent $q/2$ was used in \cite{SaulisBuch}.
\end{remark}

\begin{proof}[Proof of Theorem \ref{thm:CharlierPolynomials}]
We start by noting that $\BE H_q(Z_1)=0$ and $\BV H_q(Z_1)=q!$, and hence $\BE S_n=0$ and $\BV S_n=nq!$. So, according to \cite[Theorem 2.11]{Arcones} (see also \cite[Theorem 2.2]{EichelsbacherLoewe}) the sequence of random variables {$(S_n/(a_n\sqrt{nq!}))_{n\in\N}$} satisfies a MDP with speed $a_n^2$ and a good rate function {$\mathcal{I}$} with $\mathcal{I}(z)>0$ for $z\neq 0$ and $\mathcal{I}(z)\to\infty$, as $z\to\pm\infty$, if and only if 
\begin{align}\label{eq:NessesaryConditionMDP}
\limsup_{n\to\infty}{1\over a_n^2}\Big(\log n+\log\BP\big(|H_q(Z_1)|>a_n\sqrt{nq!}\big)\Big)=-\infty.
\end{align}
In addition, if condition \eqref{eq:NessesaryConditionMDP} is satisfied, then the MDP holds with the good rate function $\mathcal{I}(z)=z^2/2$.

To analyse the probability in \eqref{eq:NessesaryConditionMDP}, we start by observing that, for sufficiently large $x$, ${1\over 2}x^q\leq |H_q(x)| \leq 2x^q$ and hence, for sufficiently large $n$, we have that
\begin{align*}
\BP\big(Z_1^q>{2}a_n\sqrt{nq!}\big)\leq \BP\big(|H_q(Z_1)|>a_n\sqrt{nq!}\big)\leq \BP\Big(Z_1^q>{1\over 2}a_n\sqrt{nq!}\Big).
\end{align*}
Moreover, for sufficiently large $m\in\N$, one has that the Poisson random variable $Z_1$ satisfies $\BP(Z_1\geq m) \leq 2\BP(Z_1 = m)$. Thus, we obtain
\begin{align*}
\BP\big(Z_1=\big\lceil\big(2a_n\sqrt{nq!}\big)^{1/q}\,\big\rceil+1\big)\leq \BP\big(|H_q(Z_1)|>a_n\sqrt{nq!}\big)\leq 2\BP\Big(Z_1=\Big\lfloor\Big({1\over 2}a_n\sqrt{nq!}\Big)^{1/q}\,\Big\rfloor\Big)
\end{align*}
for sufficiently large $n$. Next, the elementary inequality $(m/2)^{m/2}\leq m!\leq m^m$ and $\BP(Z_1 = m)={e^{-1}\over m!}$ imply that
\begin{align*}
-2m\log m\leq -m\log m-1 \leq\log \BP(Z_1 = m) \leq -\frac{m}{2} (\log m-\log 2)-1\leq -\frac{m}{4} \log m
\end{align*}
for large enough $m\in\N$. It follows that
\begin{align*}
-2\Big(\big\lceil\big(2a_n\sqrt{nq!}\big)^{1/q}\,\big\rceil+1\Big)&\log\Big(\big\lceil\big(2a_n\sqrt{nq!}\big)^{1/q}\,\big\rceil+1\Big)\\
&\leq\log \BP\big(|H_q(Z_1)|>a_n\sqrt{nq!}\big) \\
&\qquad\qquad \leq \log 2-\frac{1}{4}\Big\lfloor\Big({1\over 2}a_n\sqrt{nq!}\Big)^{1/q}\,\Big\rfloor\log\Big\lfloor\Big({1\over 2}a_n\sqrt{nq!}\Big)^{1/q}\,\Big\rfloor
\end{align*}
for large enough $n\in\N$. The left- and the right-hand side in the previous chain of inequalities asymptotically behave like constant multiples of
$$
-a_n^{1/q}n^{1/(2q)}\log(a_n\sqrt{n}),
$$
as $n\to\infty$. Clearly, this expression asymptotically dominates the summand $\log n$ in \eqref{eq:NessesaryConditionMDP} by our assumption that $a_n\to\infty$, as $n\to\infty$. This means that there exist constants $c_q,C_q\in(0,\infty)$ depending only on $q$ such that
\begin{align*}
&-c_q\limsup_{n\to\infty}a_n^{-2+1/q}n^{1/(2q)}\log(a_n\sqrt{n})\\
&\qquad\qquad\leq \limsup_{n\to\infty}{1\over a_n^2}\Big(\log n+\log\BP\big(|H_q(Z_1)|>a_n\sqrt{nq!}\big)\Big)\\
&\qquad\qquad\qquad\qquad \leq -C_q\limsup_{n\to\infty}a_n^{-2+1/q}n^{1/(2q)}\log(a_n\sqrt{n}).
\end{align*}
Especially, if the sequence $(a_n)_{n\in\N}$ satisfies the conditions in part a), both the {left-hand and the right-hand} side are $-\infty$. On the other hand, the left- and the right-hand side tend to constants under the condition of part b). So, the necessary and sufficient condition \eqref{eq:NessesaryConditionMDP} for a MDP for a sequence of independent and identically distributed random variables completes the argument.
\end{proof}

\section{Proofs: Applications}\label{sec:ProofApplications}

\begin{proof}[Proof of Corollary \ref{cor:ExFixedKernel}]
It follows from \eqref{eqn:kernels} and \eqref{eqn:VarianceUstatistic} that $\BV S_n \geq v_f t_n^{2q-1}$ (see also \cite[Theorem 5.2]{ReitznerSchulte}). Together with the assumptions on $\mu_n$ and $f$ this yields that, for all $m\in\mathbb{N}$ with $m\geq 3$ and $\sigma\in{\widetilde{\Pi}}^m(q)$,
$$
\frac{1}{(\BV S_n)^{m/2}} \bigg| \int_{\BX^{|\sigma|}} (\otimes_{\ell=1}^{m} f)_\sigma \, \dint \mu_n^{|\sigma|} \bigg| \leq \frac{t_n^{|\sigma|} \|f\|_\infty^m}{(v_ft_n^{2q-1})^{m/2}}\,.
$$
Since $|\sigma|\leq (q-1)m+1$ and $t_n\geq 1$, the right-hand side is bounded by
$$
t_n^{1-m/2} \bigg( \frac{\|f\|_\infty}{\sqrt{v_f}}\bigg)^m \leq t_n^{-(m-2)/2} \max\{(\|f\|_\infty/\sqrt{v_f})^3,\|f\|_\infty/\sqrt{v_f}\}^{m-2}\,.
$$
Now the statement follows from Theorem \ref{thm:MDP2}.
\end{proof}

\begin{proof}[Proof of Corollary \ref{cor:ExRGG}]
For a connected graph $G$ with vertices $1,\hdots,\tilde{q}$ and $\diamondsuit\in\{=,\subset\}$ we have that
\begin{align}\label{eqn:BoundSubgraph}
& f_{\diamondsuit}(x_1,\hdots,x_{\tilde{q}};G,r_n) \\
& \leq \frac{1}{\tilde{q}!} \! \sum_{\varrho\in\operatorname{Per}(\tilde{q})} \!\!\! \mathbf{1}\{ x_{\varrho(i)}\leftrightarrow x_{\varrho(j)} \text{ in $\operatorname{RGG}(\{x_1,\hdots,x_{\tilde{q}}\},r_n)$ if } i\leftrightarrow j \text{ in } G \text{ for all } i,j\in\{1,\hdots,\tilde{q}\}\} \notag
\end{align}
for $x_1,\hdots,x_{\tilde{q}}\in W$, where $\leftrightarrow$ denotes an edge between two vertices and $\operatorname{Per}(\tilde{q})$ is the set of permutations of $1,\hdots,\tilde{q}$.
In the following let $m\in\N$ {with $m\geq 3$,} $i_1,\hdots,i_m\in\{1,\hdots,k\}$ and $\sigma\in \widetilde{\Pi}(i_1,\hdots,i_m)$. Then, by applying \eqref{eqn:BoundSubgraph} to the factors in the product of functions, we have that
$$
\bigg|\int_W(\otimes_{\ell=1}^m f_{\diamondsuit_{i_\ell}}(\,\cdot\,;G_{i_\ell},r_n))_\sigma\,\dint\mu_n^{|\sigma|}\bigg|\leq t_n^{|\sigma|} \Vol(W) (\kappa_dr_n^d)^{|\sigma|-1}.
$$
Here, we have used that since $\sigma\in \widetilde{\Pi}(i_1,\hdots,i_m)$, for each choice of permutations $\varrho_1,\hdots,\varrho_m$ coming from \eqref{eqn:BoundSubgraph} the points $x_1,\hdots,x_{|\sigma|}$ form a particular connected random geometric graph with distance threshold $r_n$, where it is completely determined which points are connected by edges. Hence, successive integration leads to the bound above. Thereby, the factorials in the denominator and the numbers of permutations cancel out.

Together with our variance assumption \eqref{eqn:vRGG} we obtain that
$$
\frac{\Big|\int_W(\otimes_{\ell=1}^m a_{i_\ell} f_{\diamondsuit_{i_\ell}}(\,\cdot\,;G_{i_\ell},r_n))_\sigma\,\dint\mu_n^{|\sigma|}\Big|}{(\BV S_n)^{m/2}}\leq \frac{t_n^{|\sigma|} a^m \Vol(W)(\kappa_dr_n^d)^{|\sigma|-1}}{v^{m/2} \max\{t_n^{2q-1}(\kappa_dr_n^d)^{2q-2},t_n^{p}(\kappa_dr_n^d)^{p-1}\}^{m/2}}\,.
$$
Next we consider the cases $\kappa_d t_nr_n^d\geq 1$ and $\kappa_d t_nr_n^d < 1$ separately. If $\kappa_d t_nr_n^d\geq 1$, because of $|\sigma|\leq m(q-1)+1$ we have that
\begin{align*}
\frac{\Big|\int_W(\otimes_{\ell=1}^m a_{i_\ell} f_{\diamondsuit_{i_\ell}}(\,\cdot\,;G_{i_\ell},r_n))_\sigma\,\dint\mu_n^{|\sigma|}\Big|}{(\BV S_n)^{m/2}} & \leq \frac{t_n^{m(q-1)+1} a^m \Vol(W)(\kappa_dr_n^d)^{m(q-1)}}{v^{m/2} t_n^{mq-m/2}(\kappa_dr_n^d)^{m(q-1)}} \\
& = \frac{t_n^{1-m/2} a^m \Vol(W)}{v^{m/2}}\\
& \leq \max\{1,a^2\Vol(W)/v\}^{m-2} (a/\sqrt{vt_n})^{m-2}\,.
\end{align*}
Similarly, if $\kappa_d t_nr_n^d < 1$, we obtain that
\begin{align*}
\frac{\Big|\int_W(\otimes_{\ell=1}^m a_{i_\ell} f_{\diamondsuit_{i_\ell}}(\,\cdot\,;G_{i_\ell},r_n))_\sigma\,\dint\mu_n^{|\sigma|}\Big|}{(\BV S_n)^{m/2}} 
& \leq \frac{t_n^{p} a^m \Vol(W)(\kappa_dr_n^d)^{p-1}}{v^{m/2} t_n^{mp/2}(\kappa_dr_n^d)^{m(p-1)/2}} \\
& = \frac{a^m \Vol(W)}{v^{m/2}t_n^{(m-2)p/2}(\kappa_dr_n^d)^{(m-2)(p-1)/2} } \\
&\leq \max\{1,a^2\Vol(W)/v\}^{m-2} (a/\sqrt{v t_n^p (\kappa_dr_n^d)^{p-1}})^{m-2}\,,
\end{align*}
where we used that $|\sigma|\geq p$. Now, the result follows from Theorem \ref{thm:MDP2} with $f_n^{(i)}:=a_i f_{\diamondsuit_i}(\cdot;G_i,r_n)$ for $i\in\{1,\hdots,k\}$.
\end{proof}

\begin{proof}[Proof of Corollary \ref{cor:ExOrnsteinUhlenbeck}]
It follows from the proof of the central limit theorem for $Q(T_n)$, \cite[Theorem 7.2]{PeccatiSoleTaqquUtzet}, that $Q(T_n)$ can be re-written as a sum of a first- and a second-order Wiener-It\^o integral. More precisely, we have that
$$
Q(T_n)= I_1(f_n^{(1)}) + I_2(f_n^{(2)})
$$
with the functions $f_n^{(1)}: \R\times\R\to\R$ and $f_n^{(2)}: (\R\times\R)^2\to\R$ given by
$$
f^{(1)}_n((x,u)):=u^2 \tilde{f}_n^{(1)}(x) \quad \text{ and } \quad f^{(2)}_n((x_1,u_1),(x_2,u_2)) := u_1 u_2 \tilde{f}_n^{(2)}(x_1,x_2)\,,
$$
where
\begin{align*}
\tilde{f}_n^{(1)}(x) & := {\bf 1}\{x\in(-\infty,T_n]\}  \big(e^{2\varrho x}(1-e^{-2 \varrho T_n}){\bf 1}\{x\leq 0\} \\
& \hskip 2.5cm + e^{2\varrho x}(e^{-2\varrho x}-e^{-2\varrho T_n}){\bf 1}\{x> 0\}\big)\,,\\
\tilde{f}_n^{(2)}(x_1,x_2) & := {\bf 1}\{x_1,x_2\in(-\infty,T_n]\} \big(e^{\varrho (x_1+x_2)}(1-e^{-2 \varrho T_n}){\bf 1}\{\max\{x_1,x_2\}\leq 0\}\\
& \hskip 2.5cm + e^{\varrho (x_1+x_2)}(e^{-2\varrho \max\{x_1,x_2\}}-e^{-2\varrho T_n}){\bf 1}\{\max\{x_1,x_2\}> 0\}\big)\,.
\end{align*}
Note that $\tilde{f}_n^{(1)}$ and $\tilde{f}_n^{(2)}$ are both non-negative and satisfy the estimates
\begin{equation}\label{eqn:BoundTilfefn1}
\tilde{f}_n^{(1)}(x)\leq {\bf 1}\{x\in(-\infty,T_n]\} \min\{e^{2\varrho x},1\}
\end{equation}
and
\begin{equation}\label{eqn:BoundTilfefn2}
\tilde{f}_n^{(2)}(x_1,x_2) \leq  {\bf 1}\{x_1,x_2\in(-\infty,T_n]\}\, e^{-\varrho |x_1-x_2|} \,e^{2\varrho\min\{\max\{x_1,x_2\},0\}}\,.
\end{equation}
In what follows, we let $m\geq 3$, $i_1,\hdots,i_m\in\{1,2\}$ and $\sigma\in\widetilde{\Pi}_{\geq 2}(i_1,\hdots,i_m)$. Then, we obtain
\begin{align*}
& \bigg|\int_{((-\infty,T_n]\times \R)^{|\sigma|}} (\otimes_{\ell=1}^m f_n^{(i_\ell)})_\sigma \, \dint\mu^{|\sigma|}\bigg| \\
& \leq \int_{\R^{|\sigma|}} \prod_{j=1}^{|\sigma|} (|u_j|^{m_j}+u_j^{2m_j}) \, \nu^{|\sigma|}(\dint(u_1,\hdots,u_{|\sigma|})) \int_{(-\infty,T_n]^{|\sigma|}} (\otimes_{\ell=1}^m \tilde{f}_n^{(i_\ell)})_\sigma \, \dint\lambda^{|\sigma|}\,,
\end{align*}
where we denote by $m_j$ the cardinality of the $j$th block of $\sigma$. Since $\sum_{j=1}^{|\sigma|} m_j=\sum_{\ell=1}^m i_\ell\leq 2m$ and $|\sigma|\leq \frac{1}{2}\sum_{\ell=1}^m i_\ell\leq m$, it follows from the assumptions on $\nu$ that
$$
\int_{\R^{|\sigma|}} \prod_{j=1}^{|\sigma|} (|u_j|^{m_j}+u_j^{2m_j}) \, \nu^{|\sigma|}(\dint(u_1,\hdots,u_{|\sigma|})) \leq \prod_{j=1}^{|\sigma|} (M^{m_j}+M^{2m_j}) \leq 2^m M^{4m}\,.
$$
For $j\in\{1,\hdots,|\sigma|\}$ the estimates \eqref{eqn:BoundTilfefn1} and \eqref{eqn:BoundTilfefn2} lead to
\begin{align*}
& \int_{(-\infty,T_n]^{|\sigma|}} {\bf 1}\{x_i\leq x_j, i\in\{1,\hdots,|\sigma|\}\} (\otimes_{\ell=1}^m \tilde{f}_n^{(i_\ell)})_\sigma(x_1,\hdots,x_{|\sigma|}) \, \lambda^{|\sigma|}(\dint(x_1,\hdots,x_{|\sigma|}))\\
& \leq \bigg( 2 \int_0^{\infty} e^{-\varrho s} \, \dint s \bigg)^{|\sigma|-1} \int_{-\infty}^{T_n} e^{\min\{2\varrho t,0\}} \, \dint t = (\varrho/2)^{1-|\sigma|} (T_n+1/(2\varrho))\,.
\end{align*}
Moreover, according to \eqref{eq:VarianceIntegrals} we have that
$$
\BV Q(T_n)=\|f_n^{(1)}\|_{L^2(\mu^1)}^2+ 2 \|f_n^{(2)}\|_{L^2(\mu^2)}^2\geq \|f_n^{(1)}\|_{L^2(\mu^1)}^2\,,
$$
which leads to the lower variance bound
\begin{align*}
\BV Q(T_n) & \geq \int_{(-\infty,T_n]\times\R} f_n^{(1)}(x,u)^2 \, \mu(\dint(x,u)) = \int_{-\infty}^\infty u^4 \, \nu(\dint u) \int_0^{T_n} \big(1-e^{-2\varrho (T_n-x)}\big)^2 \, \dint x \\
& \geq \int_{-\infty}^\infty u^4 \, \nu(\dint u) \int_0^{T_n} 1-2e^{-2\varrho (T_n-x)} \, \dint x \\
&= c_\nu \big(T_n-(1-e^{-2\varrho T_n})/\varrho \big) \geq c_\nu \big(T_n-1/\varrho \big)\,.
\end{align*}
Note that $1\leq |\sigma|\leq m+1$. Altogether we see that
\begin{align*}
& \frac{1}{(\BV Q(T_n))^{m/2}} \bigg|\int_{((-\infty,T_n]\times \R)^{|\sigma|}} (\otimes_{\ell=1}^m f_n^{(i_\ell)})_\sigma \, \dint\mu^{|\sigma|}\bigg|\\
& \leq (2M^4)^m
(\varrho/2)^{1-|\sigma|} |\sigma| \frac{T_n+1/(2\varrho)}{(c_\nu\big(T_n-1/\varrho \big))^{m/2}} \allowdisplaybreaks\\
& \leq (2M^4)^m
\max\{1,2/\varrho\}^m 2^m \frac{T_n+1/(2\varrho)}{(c_\nu\big(T_n-1/\varrho \big))^{m/2}} \allowdisplaybreaks\\
& \leq (4M^4)^m \max\{1,2/\varrho\}^m  \frac{T_n+1/(2\varrho)}{(c_\nu\big(T_n-1/\varrho \big))^{m/2}} \allowdisplaybreaks\\
& \leq (4M^4)^2 \max\{1,2/\varrho\}^2  \frac{T_n+1/(2\varrho)}{c_\nu\big(T_n-1/\varrho \big)} \bigg( \frac{4M^4\max\{1,2/\varrho\}}{\sqrt{c_\nu\big(T_n-1/\varrho \big)}}\bigg)^{m-2}\\
& \leq \max\bigg\{1,(4M^4)^2\max\{1,2/\varrho\}^2  \frac{T_n+1/(2\varrho)}{c_\nu\big(T_n-1/\varrho \big)}\bigg\}^{m-2} \Bigg( \frac{4M^4\max\{1,2/\varrho\}}{\sqrt{c_\nu\big(T_n-1/\varrho \big)}}\Bigg)^{m-2}\,.
\end{align*}
Thus, Theorem \ref{thm:MDP} can be applied to complete the proof.
\end{proof}

\subsection*{Acknowledgement}
CT was supported by the DFG priority program SPP 2265 \textit{Random Geometric Systems}.

\end{document}